\newtheorem{introtheorem}{Theorem}
\newtheorem{introproposition}[introtheorem]{Proposition}
\newtheorem{theorem}{Theorem}[section]
\newtheorem{lemma}[theorem]{Lemma}
\newtheorem{proposition}[theorem]{Proposition}
\newtheorem{corollary}[theorem]{Corollary}
\theoremstyle{definition}
\newtheorem{definition}[theorem]{Definition}
\newtheorem{notations}[theorem]{Notations}
\newtheorem{notation}[theorem]{Notation}
\newtheorem{remark}[theorem]{Remark}
\newtheorem{remarks}[theorem]{Remarks}
\newtheorem{example}[theorem]{Example}
\newtheorem{examples}[theorem]{Examples}
\newtheorem{se}[theorem]{}
\newtheorem*{remark*}{Remark}
\newtheorem*{remarks*}{Remarks}
\newtheorem*{definition*}{Definition}
\renewcommand{\tilde}{\widetilde} 
\newcommand{\C}{\mathbf{C}}
\newcommand{\R}{\mathbf{R}}
\newcommand{\Z}{\mathbf{Z}}
\newcommand{\cZ}{\mathcal{Z}}
\newcommand{\cR}{\mathcal{R}}
\newcommand{\cC}{\mathcal{C}}
\newcommand{\dD}{\mathcal{D}}
\newcommand{\len}{\ell}
\newcommand{\T}{\mathbf{T}}
\newcommand{\tzeta}{{\tilde{\zeta}}}
\newcommand{\tr}{\mathrm{tr}}
\newcommand{\vol}{\mathrm{vol}}
\newcommand{\Tr}{\mathrm{tr}}
\newcommand{\im}{\mathrm{im}}
\newcommand{\Res}{\mathrm{Res}}
\newcommand{\Hom}{\mathrm{Hom}}
\renewcommand{\Re}{\mathsf{Re}}
\newcommand{\coker}{\mathrm{coker}}
\newcommand{\dil}{\mathrm{dil}}
\newcommand{\nn}{\nonumber}
\newcommand{\bea}          {\begin{eqnarray}}
\newcommand{\eea}          {\end{eqnarray}}
\newcommand{\beastar}          {\begin{eqnarray*}}
\newcommand{\eeastar}          {\end{eqnarray*}}
\begin{document}
 
\date{\today (version for JNCG)}
\title[The spectral length of a map]{The spectral length of a map \\ between Riemannian manifolds}
\author[G.\ Cornelissen]{Gunther Cornelissen}
\author[J.~W.\ de Jong]{Jan Willem de Jong}
\address{\normalfont{Mathematisch Instituut, Universiteit Utrecht, Postbus 80.010, 3508 TA Utrecht, Nederland}}
\email{\{g.cornelissen,j.w.w.dejong\}@uu.nl}
\subjclass[2010]{53C20, 58J42, 58J50, 58J53}
\thanks{We thank Erik van den Ban, Alain Connes, Nigel Higson, Henri Moscovici, Jorge Plazas and Ori Yudilevich for their helpful suggestions.}
\begin{abstract} 
\noindent To a closed Riemannian manifold, we associate a set of (special values of) a family of Dirichlet series, indexed by functions on the manifold. We study the meaning of equality of two such families of spectral Dirichlet series under pullback along a map. 
This allows us to give a spectral characterization of when a smooth diffeomorphism between Riemannian manifolds is an isometry, in terms of equality along pullback. We also use the invariant to define the {(spectral) length} of a map between Riemannian manifolds, where a map of length zero between manifolds is an isometry. We show that this length induces a distance between Riemannian manifolds up to isometry. 
\end{abstract}

\maketitle

\tableofcontents 

\newpage

\section*{Introduction}

It is well known that the spectrum of the Laplace-Beltrami operator $\Delta_Y$ on  a (closed, viz., compact without boundary) Riemannian manifold $Y$ does not necessarily capture its isometry type (cf.\ Milnor \cite{Milnor} and later work); actually, it does not even determine the homeomorphism type of the manifold (\cite{Ikeda}, \cite{Vigneras}).  Knowledge of the spectrum $\Lambda_Y = \{ \lambda \}$  (with multiplicities)  is equivalent to knowledge of the zeta function
$$ \zeta_Y(s) := \tr(\Delta^{-s}) = \sum_{0 \neq \lambda \in \Lambda_Y} \lambda^{-s}. $$
In this paper, we study what happens if one considers this zeta function as only one member of a family of zeta functions / Dirichlet series associated with the algebra of functions on $Y$.   Namely, for a function $a_0 \in C^{\infty}(Y)$, let $$\zeta_{Y,a_0}(s):=\tr(a_0\Delta_Y^{-s})$$ denote the zeta function associated to $a_0$ and $\Delta_Y$, where the trace is taken in $L^2(Y,d\mu_Y)$. It is a generalized Dirichlet series in $\lambda^{-s}$ for $\lambda\in \Lambda_Y$, and it can be extended to a meromorphic function on the complex plane. We will actually also need  the following higher order version of this zeta function, which arises naturally for example in noncommutative geometry. For functions $a_1 \in C^{\infty}(Y)$, we define $$\tzeta_{Y,a_1}(s):=\tr(a_1[\Delta_Y,a_1]\Delta_Y^{-s}).$$ These zeta functions are  diffeomorphism invariants by construction (cf.\ Lemma \ref{diffinv} for an exact statement).  We have the following relation between the zeta functions: $$\tzeta_{Y,a_1}(s):=\zeta_{Y,g_Y(da_1,da_1)}(s)$$
(thus, we could take the right hand side as a definition, but in that way we would obscure the possibility of generalizing our constructions to the case of noncommutative Riemannian geometries). 

In the first part of this paper, we study the meaning of equality of these families of zeta functions (rather than just the spectrum)  under pullback by a map: 

\begin{introtheorem} \label{Z0thm} Let $\varphi \, : \, X \rightarrow Y$ denote a smooth diffeomorphism between closed connected smooth Riemannian manifolds with smooth metric. The following are equivalent:
\begin{enumerate}
\item[\textup{(i)}] We have that
 \begin{enumerate}
\item[\textup{(a)}] $ \zeta_{Y,a_0} = \zeta_{X,\varphi^*(a_0)}$ for all $a_0 \in C^{\infty}(Y)$, and 
\item[\textup{(b)}] $ \tzeta_{Y,a_1} = \tzeta_{X,\varphi^*(a_1)}$ for all $a_1 \in C^{\infty}(Y)$.
\end{enumerate}
\medskip
\item[\textup{(ii)}] The map $\varphi$ is an isometry.
\end{enumerate} 
\end{introtheorem}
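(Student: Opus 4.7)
The direction (ii) $\Rightarrow$ (i) is straightforward: an isometry $\varphi$ intertwines the Laplacians, $\varphi^*\Delta_Y = \Delta_X\varphi^*$, and pulls back the volume form, so by the diffeomorphism invariance lemma both families of zeta functions agree under pullback.

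For (i) $\Rightarrow$ (ii), the essential input is the small-time heat kernel expansion on a closed Riemannian $n$-manifold,
$$\tr(a_0\,e^{-t\Delta_Y}) \sim \sum_{k\ge 0} t^{(k-n)/2} \int_Y a_0\,b_k \,d\mu_Y \quad (t\to 0^+),$$
where the $b_k$ are the universal Minakshisundaram--Pleijel/Gilkey coefficients in curvature and its covariant derivatives, with $b_0 = (4\pi)^{-n/2}$. The Mellin transform converts this into the meromorphic continuation of $\zeta_{Y,a_0}(s)$ having poles at $s = (n-k)/2$ with residues proportional to $\int_Y a_0\, b_k\,d\mu_Y$. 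First, taking $a_0 = 1$ in (i)(a) gives $\zeta_X = \zeta_Y$, so the rightmost pole locations coincide and $\dim X = \dim Y =: n$. Second, comparing residues at $s = n/2$ in (i)(a) yields, for every $a_0 \in C^\infty(Y)$,
$$\int_Y a_0\, d\mu_Y = \int_X \varphi^*(a_0)\, d\mu_X,$$
equivalently $\varphi^* d\mu_Y = d\mu_X$, so $\varphi$ is volume-preserving. Third, exploiting the identity $\tzeta_{Y,a_1} = \zeta_{Y,g_Y(da_1,da_1)}$ and its $X$-analogue, the residues at $s = n/2$ in (i)(b), combined with the just-established volume preservation, yield, writing $f := \varphi^*a_1$,
$$\int_X (\varphi^* g_Y)(df,df)\, d\mu_X = \int_X g_X(df,df)\, d\mu_X,$$
which by surjectivity of $\varphi^*\colon C^\infty(Y)\to C^\infty(X)$ holds for every $f \in C^\infty(X)$.

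The remaining, and most delicate, step is to conclude that the symmetric $2$-tensor $Q := \varphi^* g_Y - g_X$ vanishes identically on $X$ from the vanishing of $\int_X Q(df,df)\,d\mu_X$ on all $f \in C^\infty(X)$. By polarization one first upgrades to $\int_X Q(df,dg)\,d\mu_X = 0$ for all pairs $f,g \in C^\infty(X)$, and then a localization argument finishes the proof: for each $p\in X$ and each $v,w \in T_p^*X$, pick smooth $f_v, f_w$ with $df_v(p)=v$, $df_w(p)=w$ and a family of bump functions $\chi_\epsilon$ concentrated near $p$; substituting $f = \chi_\epsilon f_v$, $g = \chi_\epsilon f_w$ and normalizing by $\int_X \chi_\epsilon^2\,d\mu_X$, the leading contribution as $\epsilon\to 0$ is $Q_p(v,w)$, which therefore vanishes. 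Hence $Q \equiv 0$ and $\varphi$ is an isometry. The main obstacles I anticipate are (a) the careful justification of the residue manipulations, including the treatment of the kernel of $\Delta$ in the definition of $\zeta_{Y,a_0}$, and (b) controlling the error terms in the localization so that only $Q_p(v,w)$ survives in the limit.
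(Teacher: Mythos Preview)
Your strategy coincides with the paper's up through the residue step: both use the pole of $\zeta_{\bullet,a_0}$ at $s=n/2$ to get $w_\varphi=1$, and the pole of $\tzeta_{\bullet,a_1}$ at $s=n/2$ (via $\tzeta_{\bullet,a_1}=\zeta_{\bullet,g(da_1,da_1)}$) together with volume preservation to obtain, after polarization,
\[
\int_X Q(df,dg)\,d\mu_X=0\qquad\text{for all }f,g\in C^\infty(X),
\]
where $Q$ is the difference of the two cometrics on $X$.

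The divergence is in how you pass from this integral identity to $Q\equiv 0$. Your bump-function localization does not work as written, and the obstacle you flag under (b) is real rather than merely technical. With $f=\chi_\epsilon f_v$, $g=\chi_\epsilon f_w$ and $f_v(p)=f_w(p)=0$, one has $|f_v|,|f_w|=O(\epsilon)$ on $\mathrm{supp}\,\chi_\epsilon$ while $|d\chi_\epsilon|=O(\epsilon^{-1})$, so every cross term such as $\int \chi_\epsilon f_w\,Q(df_v,d\chi_\epsilon)\,d\mu_X$ and the term $\int f_v f_w\,Q(d\chi_\epsilon,d\chi_\epsilon)\,d\mu_X$ are of the \emph{same} order $\epsilon^n$ as the ``main'' term $\int\chi_\epsilon^2\,Q(df_v,df_w)\,d\mu_X$. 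After dividing by $\int\chi_\epsilon^2\sim\epsilon^n$ the limit does not isolate $Q_p(v,w)$: it picks up extra contributions depending on the profile of $\chi_\epsilon$ and on $Q$ near $p$.

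The paper avoids this by a single integration by parts. Because $w_\varphi=1$, the volume form of $\varphi^*g_Y$ equals $d\mu_X$, so Green's identity applies to both cometrics against the \emph{same} measure, giving
\[
\int_X Q(df,dg)\,d\mu_X=\int_X f\,\bigl(\Delta_{\varphi^*g_Y}-\Delta_X\bigr)g\,d\mu_X.
\]
Since this vanishes for all $f$, the second-order operator $\Delta_{\varphi^*g_Y}-\Delta_X$ annihilates every $g$; its principal symbol is exactly $Q$, hence $Q\equiv 0$. Equivalently, $\varphi^*$ is unitary (from $w_\varphi=1$) and intertwines the Laplacians, so $\varphi$ is an isometry by Watson's lemma. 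If you prefer a direct argument in the spirit of your localization, high-frequency test functions $f=a\cos(\lambda\phi)$ with $\lambda\to\infty$ do succeed in isolating the top-order term $\int a^2\,Q(d\phi,d\phi)\,d\mu_X$, but the integration-by-parts route is both shorter and exactly what the paper does.
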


\noindent To shorten notation, if a map $\varphi \, : \, (X,g_X) \rightarrow (Y,g_Y)$ is fixed, we set $$ a^* := \varphi^*(a) $$
for $a \in C^{\infty}(Y)$, unless confusion can arise. 

\begin{remarks*}
\mbox{ }

- Since we are dealing with usual Dirichlet series (the spectrum is an increasing sequence of positive real numbers with finite multiplicities), the condition $\zeta_{Y,a_0}=\zeta_{X,a_0^*}$ is satisfied when   $\zeta_{Y,a_0}(k)=\zeta_{X,a_0^*}(k) $ for all sufficiently large integers $k$, and similarly for the higher order zeta functions (cf.\ \cite{Serre}, Section 2.2).

- The dependence of the zeta functions on $a_i$ is $\C$-linear. Thus, we may for example restrict to functions of unit norm (supremum over the manifold). 

- One can replace the condition ``$a \in C^{\infty}(Y)$'' in Theorem \ref{Z0thm} by ``$a \in A$'' for $A$ some dense subset of $C^{\infty}(Y)$.  Since we assume the manifolds compact, we can pick such a countable set. We do not know whether it is possible to pick a \emph{finite} set $A$, depending only on some topological characteristics of the manifold. 

- Combining the above three remarks, we see that we have actually found a \emph{countable} sequence of spectrally defined, diffeomorphism invariant real numbers that characterize the manifold up to isometry  \emph{in a fixed $C^{\infty}$-diffeomorphism class}: the values $\zeta_{X,a_0}(k)$ and $\tzeta_{X,a_1}(k)$ for $a_i$ in a countable basis for $C^{\infty}(Y)$, and $k$ running through all sufficiently large integers. Notice that this invariant presupposes knowledge of the manifold: one needs to be able to ``label'' by the smooth functions, and by integers $k$.  
\end{remarks*}

The proof of Theorem \ref{Z0thm} is rather formal (essentially, a suitable residue of the two-variable zeta function contains the metric tensor). By contrast, the next theorem, which studies what happens if we only have equality of the one-variable zeta functions, actually uses some analysis of PDE's and structure of nodal sets:

\begin{introtheorem} \label{extraprop}
Let $\varphi \, : \, X \rightarrow Y$ denote a smooth diffeomorphism between closed Riemannian manifolds with smooth metric. Then 
\begin{itemize}
\item In the previous theorem, it suffices in condition \textup{(b)} to have equality of \emph{one} coefficient of the Dirichlet series (for all $a_1$) for conditions \textup{(a)} and \textup{(b)} to be equivalent to \textup{(ii)}.  
\item If the spectrum of $X$ or $Y$ is simple, then condition \textup{(a)} alone is equivalent to \textup{(ii)} in the previous theorem. 
\end{itemize} 
\end{introtheorem}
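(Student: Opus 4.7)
In both bullets the starting point is the following consequence of condition (a). Taking $a_0 = 1$ in (a) gives $\zeta_Y = \zeta_X$, so the spectra of $\Delta_X$ and $\Delta_Y$ agree with multiplicities; matching the coefficient of $\lambda^{-s}$ on the two sides of the Dirichlet series identity $\zeta_{Y,a_0} = \zeta_{X,a_0^*}$ then yields, for every eigenvalue $\lambda > 0$,
$$\int_Y a_0\, \rho_\lambda^Y\, d\mu_Y = \int_X a_0^*\, \rho_\lambda^X\, d\mu_X \qquad \forall a_0 \in C^\infty(Y),$$
where $\rho_\lambda := \sum_k |\phi_{\lambda,k}|^2$ for an orthonormal basis of the $\lambda$-eigenspace. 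Equivalently, $\varphi^*(\rho_\lambda^Y\, d\mu_Y) = \rho_\lambda^X\, d\mu_X$ as measures on $X$.

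For the first bullet, assume additionally that one coefficient of $\tzeta$, at some fixed eigenvalue $\mu > 0$, matches: combined with the identity above this yields
$$\int_X \bigl(g_X(df^*, df^*) - g_Y(df, df)\circ\varphi\bigr)\, \rho_\mu^X\, d\mu_X = 0 \qquad \forall f \in C^\infty(Y).$$
Denote by $Q$ the smooth symmetric $(2,0)$-tensor on $X$ for which this integrand equals $Q(df^*, df^*)$; by construction, $\varphi$ is an isometry iff $Q \equiv 0$. Polarizing and using that $f \mapsto f\circ\varphi$ is a bijection $C^\infty(Y) \to C^\infty(X)$ gives $\int_X Q(df, dh)\, \rho_\mu^X\, d\mu_X = 0$ for every $f, h \in C^\infty(X)$. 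Integration by parts in $f$ turns this into the statement that a second-order linear differential operator in $h$, with principal part $\rho_\mu^X\, Q^{ij}\, \partial_i\partial_j$, annihilates every smooth $h$; its principal symbol must therefore vanish, so $\rho_\mu^X \cdot Q = 0$ on $X$. Since $\rho_\mu^X$ is a sum of squares of Laplace eigenfunctions, its zero set is contained in the nodal set of any one such eigenfunction, which is a real-analytic hypersurface of Hausdorff dimension $n-1$ with empty interior; hence $U_\mu := \{\rho_\mu^X > 0\}$ is open and dense, and smoothness of $Q$ propagates the vanishing from $U_\mu$ to all of $X$, giving (ii). The main obstacle here is exactly the possibly nontrivial zero set of $\rho_\mu^X$, which is the ``structure of nodal sets'' alluded to by the authors.

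For the second bullet, simplicity of the spectrum makes $\rho_\lambda = |\phi_\lambda|^2$ for a unit eigenfunction unique up to sign, so the identity of paragraph 1 reads $|\phi_\lambda^X|^2 = J \cdot (|\phi_\lambda^Y|^2 \circ \varphi)$, where $J > 0$ is defined by $\varphi^* d\mu_Y = J\, d\mu_X$. Thus $\phi_\lambda^X$ and $\sqrt{J}\,(\phi_\lambda^Y \circ \varphi)$ are two smooth unit vectors in $L^2(X)$ of identical modulus. On each connected component of the complement of their common nodal set the ratio is $\pm 1$; I claim it is a single global sign, which we absorb by an appropriate choice of $\phi_\lambda^Y$. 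The point is that both functions change sign transversally at regular points of the nodal hypersurface (so the ratio is preserved across regular crossings), while the singular part of the nodal set has codimension $\geq 2$ and therefore does not disconnect $X$. With the signs so chosen, $\phi_\lambda^X = \sqrt{J}\,(\phi_\lambda^Y \circ \varphi)$ for every $\lambda > 0$. Expanding $\sqrt{J} \in L^2(X)$ in the orthonormal basis $\{\phi_\lambda^X\}$ and computing
$$\bigl\langle \sqrt{J},\, \phi_\lambda^X\bigr\rangle_{L^2(X)} = \int_X J\,(\phi_\lambda^Y \circ \varphi)\, d\mu_X = \int_Y \phi_\lambda^Y\, d\mu_Y = 0 \quad (\lambda > 0)$$
shows that $\sqrt{J}$ is proportional to the constant eigenfunction $\phi_0^X$, hence is itself constant. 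Applying $\Delta_X$ to $\phi_\lambda^X = \sqrt{J}(\phi_\lambda^Y \circ \varphi)$ then yields $\Delta_X(\phi_\lambda^Y \circ \varphi) = (\Delta_Y \phi_\lambda^Y)\circ \varphi$; density of eigenfunctions in $C^\infty(Y)$ extends this to $\Delta_X \circ \varphi^* = \varphi^* \circ \Delta_Y$, and since the principal symbol of the Laplacian recovers the cotangent metric, such an intertwining forces $\varphi^* g_Y = g_X$. The delicate step here is the global sign argument identifying $\phi_\lambda^X$ with $\sqrt{J}(\phi_\lambda^Y \circ \varphi)$ on the whole of $X$, rather than only up to a sign modulation between distinct nodal domains.
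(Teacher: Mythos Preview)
Your argument for the first bullet is essentially the paper's: both reduce, via the coefficient at one eigenvalue $\mu$, to the pointwise identity $\rho_\mu^X\cdot\bigl(g_X^{-1}-\varphi^*g_Y^{-1}\bigr)=0$, and then invoke that the zero set of $\rho_\mu^X$ is nowhere dense (being contained in a nodal set) to conclude $Q\equiv 0$.

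For the second bullet your route is correct but genuinely different from the paper's. The paper first extracts $w_\varphi=1$ from condition (a) alone, via the residue of $\zeta_{X,a_0}$ at $s=d/2$; with $J\equiv 1$ and simple spectrum one then has $(\Psi_{Y,\lambda})^*=c_\lambda\Psi_{X,\lambda}$ with $c_\lambda$ a priori only \emph{locally} $\pm 1$. On any open set where $c_\lambda$ is constant, $\Delta_X(c_\lambda\Psi_{X,\lambda})=c_\lambda\lambda\Psi_{X,\lambda}$, so the intertwining $\Delta_X U\Psi_{Y,\lambda}=U\Delta_Y\Psi_{Y,\lambda}$ holds on the dense complement of the nodal set; both sides being continuous on all of $X$ (smoothness of $\varphi$), the identity extends, and Lemma~\ref{Watson} finishes. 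No global sign is needed. You instead bypass the residue computation, prove a \emph{global} sign $\phi_\lambda^X=\pm\sqrt{J}\,(\phi_\lambda^Y\circ\varphi)$, and then recover $J=\mathrm{const}$ by the neat orthogonality calculation $\langle\sqrt{J},\phi_\lambda^X\rangle=\int_Y\phi_\lambda^Y=0$. Your global-sign step is valid but leans on the fact that the critical nodal set has Hausdorff dimension $\le n-2$ (Han; Hardt--Simon), hence does not disconnect $X$; the paper's proof needs only the softer statement that nodal sets are nowhere dense. (In a later section the paper does establish a global-sign lemma, but by a different mechanism: a sign flip across a nodal point would force that point to be a zero of infinite order, contradicting Aronszajn's unique continuation.) The tradeoff: your argument is self-contained in not invoking the residue formula, at the cost of heavier nodal-set regularity; the paper's is lighter on the PDE side but front-loads the volume-form identity.
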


\begin{remarks*}

\mbox{ } 

- We can use the method of proof to show for example that if $g$ and $g'$ are two smooth Riemannian structures on a closed connected manifold and with simple Laplace spectrum, then an equality of heat kernels ``on the diagonal'' $K_g(t,x,x)=K_{g'}(t,x,x)$ for sufficiently small $t>0$ implies that $K_g(t,x,y)=K_{g'}(t,x,y)$ for all $t>0$ (and hence $g=g'$), cf.\ Corollary \ref{verysmooth}.  

- In section \ref{tori} we compute these zeta functions for flat tori. In this case, condition (a) is equivalent to isospectrality and the fact that $\varphi$ has trivial jacobian.

\end{remarks*}

We can rephrase  Theorem \ref{Z0thm} in terms of the \emph{length of $\varphi$}, a concept that we now introduce. The basic idea is to measure in some way the distance between the zeta functions on the one manifold and the pull back by the map of the zeta function on the other manifold. 

\begin{definition*}  
Let $\varphi \, : \, X \rightarrow Y$ denote a smooth diffeomorphism of closed Riemannian manifolds of dimension $d$. Define 
$$ d_1(\varphi,a_0,a_1):= \mathop{\sup_{d \leqslant s\leqslant d+1}} \! \! \! \max \, \{ | \log \left| \frac{\zeta_{X,a^*_0}(s)}{\zeta_{Y,a_0}(s)} \right| |, | \log \left| \frac{\tzeta_{X,a^*_1}(s)}{\tzeta_{Y,a_1}(s)} \right| | \}. $$ The \emph{length of $\varphi$} is defined by $$ \ell(\varphi):= \mathop{\sup_{a_0 \in C^{\infty}(Y,\R_{\geq 0})-\{0\}}}_{a_1\in C^{\infty}(Y)-\R} \frac{d_1(\varphi,a_0,a_1)}{1+d_1(\varphi,a_0,a_1)}.$$
\end{definition*}

We discuss this notion in a more abstract framework of general \emph{length categories}, a concept we believe to be useful in non-abelian categories such as closed Riemannian manifolds up to isometry, cf.\ Section \ref{lengthcat}. For now, we just state the main properties of $\ell$:

\begin{introproposition} 
The function $\ell$ satisfies 
\begin{enumerate}
\item[\textup{(i)}] For all smooth diffeomorphisms of closed Riemannian manifolds ${X \stackrel{\varphi}{\rightarrow} Y}, {Y\stackrel{\psi}{\rightarrow} Z}$, we have $$\ell(\varphi \circ \varphi) \leqslant  \ell(\varphi) + \ell (\psi);$$
\item[\textup{(ii)}] $\ell(\varphi)=0$ if and only if $\varphi$ is an isometry.
\end{enumerate} 
\end{introproposition}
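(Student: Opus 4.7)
For part (ii), the forward direction is immediate: if $\varphi$ is an isometry, Lemma \ref{diffinv} implies that pullback preserves both zeta functions, so every ratio inside $d_1$ equals one, its logarithm vanishes, and $\ell(\varphi)=0$. My plan for the reverse direction is to reduce to Theorem \ref{Z0thm}. Since $t\mapsto t/(1+t)$ is strictly increasing on $[0,\infty)$ with unique zero at $t=0$, the hypothesis $\ell(\varphi)=0$ forces $d_1(\varphi,a_0,a_1)=0$ for \emph{every} admissible pair $(a_0,a_1)$, and hence $|\zeta_{X,a_0^*}(s)|=|\zeta_{Y,a_0}(s)|$ and $|\tzeta_{X,a_1^*}(s)|=|\tzeta_{Y,a_1}(s)|$ for every $s\in[d,d+1]$.

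The next step is to remove the absolute values. Here the positivity built into the admissibility conditions is crucial: for $a_0\geqslant 0$ not identically zero, the coefficients $\int_Y a_0|e_\lambda|^2\,d\mu_Y$ in $\zeta_{Y,a_0}(s)=\sum_{\lambda\neq 0}\lambda^{-s}\int_Y a_0|e_\lambda|^2\,d\mu_Y$ are $\geqslant 0$ and strictly positive for infinitely many $\lambda$ (real-analytic eigenfunctions have measure-zero zero sets), so $\zeta_{Y,a_0}(s)>0$ for real $s$ in the convergence region $\{\Re s>d/2\}\supseteq[d,d+1]$; the identity $\tzeta_{Y,a_1}=\zeta_{Y,g_Y(da_1,da_1)}$ with $a_1\notin\R$ gives the analogue for $\tzeta_{Y,a_1}$. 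The absolute-value identities therefore promote to honest equalities on $[d,d+1]$, and meromorphic continuation extends them to identities of meromorphic functions on $\C$. $\C$-linearity in $a_0$ (write a general smooth function as a $\C$-combination of nonnegative ones) together with the trivial case $a_1\in\R$ (where both $\tzeta$'s vanish) extend the equalities to all $a_i\in C^{\infty}(Y)$, and Theorem \ref{Z0thm} concludes.

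For part (i), I would use the functorial identity $(\psi\circ\varphi)^*=\varphi^*\circ\psi^*$ to factor, for any $a_0,a_1\in C^{\infty}(Z)$,
$$\frac{\zeta_{X,(\psi\circ\varphi)^*(a_0)}(s)}{\zeta_{Z,a_0}(s)}=\frac{\zeta_{X,\varphi^*(\psi^*(a_0))}(s)}{\zeta_{Y,\psi^*(a_0)}(s)}\cdot\frac{\zeta_{Y,\psi^*(a_0)}(s)}{\zeta_{Z,a_0}(s)}$$
and likewise for $\tzeta$. Applying $|\log|xy||\leqslant|\log|x||+|\log|y||$ to each factor and the elementary inequality $\max(A_1+A_2,B_1+B_2)\leqslant\max(A_1,B_1)+\max(A_2,B_2)$ before taking the $s$-supremum yields
$$d_1(\psi\circ\varphi,a_0,a_1)\leqslant d_1(\varphi,\psi^*(a_0),\psi^*(a_1))+d_1(\psi,a_0,a_1).$$
Since $\psi$ is a diffeomorphism, $\psi^*$ preserves the admissibility conditions (nonnegativity of $a_0$ and non-constancy of $a_1$), so the first summand is a legitimate value of $d_1(\varphi,\cdot,\cdot)$.

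Finally, the elementary fact that $f(t)=t/(1+t)$ is monotone increasing and subadditive on $[0,\infty)$ (from $x/(1+x+y)+y/(1+x+y)\leqslant x/(1+x)+y/(1+y)$) gives $f(d_1(\psi\circ\varphi,a_0,a_1))\leqslant\ell(\varphi)+\ell(\psi)$, and I conclude by taking the supremum over admissible $(a_0,a_1)$ on $Z$. The main obstacle is the absolute-value-to-value promotion in (ii); once the positivity inherent in the admissibility conditions is exploited it dissolves, but this is precisely the reason the definition of $\ell$ restricts $a_0$ to be nonnegative and $a_1$ to be non-constant.
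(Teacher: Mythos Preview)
Your proposal is correct and follows essentially the same route as the paper. For (ii) you use exactly the paper's mechanism: positivity of the Dirichlet coefficients (for $a_0\geqslant 0$, and for $\tzeta$ via $g_Y(da_1,da_1)\geqslant 0$) to strip the absolute values, then the identity theorem for Dirichlet series, then linearity and Theorem~\ref{Z0thm}. For (i) you use the same factorisation of the zeta-ratio through $Y$ that the paper does; your write-up is in fact more complete, since you spell out the subadditivity of $t\mapsto t/(1+t)$ and the fact that $\psi^*$ preserves the admissibility constraints, both of which the paper leaves implicit. Two cosmetic points: the forward direction of (ii) is really the implication (ii)$\Rightarrow$(i) of Theorem~\ref{Z0thm} (or Lemma~\ref{Watson}), not Lemma~\ref{diffinv}, which pulls back the metric as well; and eigenfunctions need not be real-analytic for a merely smooth metric, so the positivity of at least one coefficient is better justified by unique continuation (as in Lemma~\ref{squares}) rather than real-analyticity.
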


We compute some examples, such as the length of the natural map between circles of different radii, see Figure \ref{fig1}; and we show that the length of a linear map between isospectral tori is bounded in terms of its spectral norm. 

We then show  that the notion of length leads to a meaningful concept of \emph{distance between Riemannian manifolds $X,Y$} as infimum of the length of all possible maps between $X$ and $Y$. 

\begin{remarks*} 
\mbox{ }

- To consider not a single zeta function, but rather a family of zeta functions over the algebra of functions is natural in noncommutative geometry (\cite{ConnesLMP}), where the unit of the algebra of functions does not have to play a distinguished role (the underlying $C^*$-algebra  could even be non-unital): our zeta functions bear some resemblance to the construction of Hochschild homology, but with genuine traces instead of residual traces. 

We list some other manifestation of the philosophy behind our main theorem. In \cite{CM}, it is shown how to associate a spectral triple to a compact hyperbolic Riemann surface (by considering the action of a uniformizing Schottky group on the (fractal) boundary of the Poincar\'e disk), such that he following property holds: if a map between two Riemann surfaces induces equal families of zeta functions of the associated spectral triples, then the map is conformal or anti-conformal. This construction was adopted to the case of  finite \emph{graphs} in \cite{dJ}. Finally, for \emph{number fields}, see \cite{CM2}. 

- One may now wonder whether a similar theory persists to the case of spin manifolds with the Dirac operator replacing the Laplace operator, and whether it applies to noncommutative ``Riemannian geometries'', a.k.a.\ spectral triples (finitely summable).

\end{remarks*}

\part{SPECTRAL DIRICHLET SERIES}

\section{Notations and preliminaries} 

\begin{notations} To set up notation, suppose $(X,g_X)$ is a closed (i.e., compact without boundary) smooth manifold of dimension $>0$, with smooth Riemannian metric. Denote by $\mu_X$ the induced measure on $X$, and let $\Delta_X$ denote the Laplace-Beltrami operator acting on  $L^2(X)=L^2(X,\mu_X)$, with domain the smooth functions. Write $\Lambda_X$ for its spectrum with multiplicities. 

Suppose we have picked an orthonormal basis of smooth real eigenfunctions for the Laplacian on a Riemannian manifold $X$. We will use various notations depending on the context. We denote an eigenfunction in the chosen basis, with eigenvalue $\lambda$, by $\Psi_{X,\lambda}$ or $\Psi_\lambda$ if the manifold is fixed. We also write  $$\Psi_X \vdash \lambda$$ if $\Psi_X$ is an eigenfunction on $X$ in our chosen basis that belongs to the eigenvalue $\lambda$. 

Let $C^{\infty}(X)$ denote the set of smooth real-valued functions on $X$ (most of the time, one may also use complex valued functions --- this should be clear from the context). Define the zeta-function parametrized by $a_0  \in C^{\infty}(X)$ as
\bea
\zeta_{X,g_X;a_0}(s) = \zeta_{X,a_0}:=\Tr(a_0 \Delta_X^{-s})
\eea
where the complex exponent is taken in the sense of spectral theory (see Formula (\ref{zetaexpansion})); 
and the double zeta function parametrized by $a_1, a_2 \in C^{\infty}(Y)$ as 
\bea
\zeta_{X,g_X;a_1,a_2}(s)=\zeta_{X,a_1,a_2}(s)=\Tr(a_1 [ \Delta_X,a_2] \Delta_X^{-s}).
\eea
We will mainly be concerned with the diagonal version of this two-variable zeta function:
\bea
\tzeta_{X,g_X;a_1}(s):=\zeta_{X,g_X;a_1,a_1}(s)=\Tr(a_1 [ \Delta_X,a_1] \Delta_X^{-s}).
\eea
Finally, let $$K_{X,g}(t,x,y)=K_X(t,x,y)$$ denote the heat kernel of $X$. Sometimes we will write $K_g$ if we are on a fixed manifold. Otherwise, our notation will mostly suppress the metric $g$. 
We also make the convention to write in the usual way $\zeta_X=\zeta_{X,1_X}.$
\end{notations}

\begin{se} By expanding in the given orthonormal basis of real eigenfunctions, we get
\bea \label{zetaexpansion}
\zeta_{Y,a_0}(s) =  \sum_{\Psi_Y} \langle \Psi_Y|a_0 \Delta_Y^{-s} |\Psi_Y \rangle = \sum_{\lambda \in \Lambda_Y-\{0\}} \lambda^{-s} \sum_{\Psi \vdash \lambda} \int_Y a_0 \Psi^2 d\mu_Y.
\eea
\end{se}

In the ``commutative'' case considered here, one can express the two-variable zeta function in terms of the one-variable version, as follows: 

\begin{lemma} \label{2to1}
$\zeta_{Y,a_1,a_2}(s):=\zeta_{Y,g_Y(da_1,da_2)}(s).$
\end{lemma}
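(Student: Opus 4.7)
The plan is to expand $\zeta_{Y,a_1,a_2}(s) = \tr(a_1[\Delta_Y,a_2]\Delta_Y^{-s})$ in an orthonormal eigenbasis of $\Delta_Y$ exactly as in formula~(\ref{zetaexpansion}), which reduces the lemma to the pointwise identity
\[
\int_Y a_1 \Psi \cdot [\Delta_Y,a_2]\Psi \; d\mu_Y \;=\; \int_Y g_Y(da_1,da_2)\, \Psi^2\, d\mu_Y
\]
for every eigenfunction $\Psi\vdash\lambda$ of $\Delta_Y$. Since the zero eigenvalue is excluded from $\Delta_Y^{-s}$ and constant eigenfunctions satisfy $d\Psi=0$, the identity only needs verification for $\lambda\neq 0$.

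To prove the pointwise identity I would first rewrite $[\Delta_Y,a_2]\Psi = \Delta_Y(a_2\Psi)-\lambda\, a_2\Psi$ and apply self-adjointness of $\Delta_Y$ on the closed manifold (Green's formula) to obtain
\[
\int_Y a_1\Psi\,\Delta_Y(a_2\Psi)\,d\mu_Y \;=\; \int_Y g_Y\bigl(d(a_1\Psi),\,d(a_2\Psi)\bigr)\,d\mu_Y.
\]
Expanding each $d(a_i\Psi)=\Psi\,da_i+a_i\,d\Psi$ by Leibniz produces the target term $\int \Psi^2 g_Y(da_1,da_2)\,d\mu_Y$, two cross terms of the form $\int a_i\Psi\, g_Y(da_j,d\Psi)\,d\mu_Y$, and the term $\int a_1 a_2 \lvert d\Psi\rvert^2\,d\mu_Y$. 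A second application of Green's identity, using $a_1 a_2\, d\Psi = d(a_1 a_2 \Psi) - \Psi\, d(a_1 a_2)$ and $\Delta_Y\Psi=\lambda\Psi$, rewrites this last term as $\lambda \int a_1 a_2 \Psi^2\,d\mu_Y$ minus exactly the two cross terms. Everything then cancels except $\int \Psi^2 g_Y(da_1,da_2)\,d\mu_Y$ and $\lambda\int a_1 a_2 \Psi^2\,d\mu_Y$, the latter of which is killed by the $-\lambda a_2\Psi$ correction sitting inside $[\Delta_Y,a_2]\Psi$. This leaves exactly the desired identity, and summing against $\lambda^{-s}$ gives $\zeta_{Y,g_Y(da_1,da_2)}(s)$.

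I do not foresee a genuine obstacle; the argument is two symmetric integrations by parts, with the main bookkeeping concern being the signs in the Laplace convention and tracking which Leibniz pieces cancel which. An alternative operator-theoretic route would use the identity $[[\Delta_Y,a_1],a_2]=-2g_Y(da_1,da_2)$ (as multiplication operators) together with cyclicity of the trace and the vanishing $\tr([\Delta_Y,f]\Delta_Y^{-s})=0$, but that approach only yields the symmetrised statement $\zeta_{Y,a_1,a_2}+\zeta_{Y,a_2,a_1}=2\zeta_{Y,g_Y(da_1,da_2)}$; to upgrade it to the lemma one still needs the symmetry of $\zeta_{Y,a_1,a_2}$ in $(a_1,a_2)$, and the cleanest proof of that symmetry is again the eigenbasis computation above, so one may as well do it directly.
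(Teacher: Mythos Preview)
Your proposal is correct and follows essentially the same route as the paper: expand in an orthonormal eigenbasis as in (\ref{zetaexpansion}) and reduce to the per-eigenfunction integral identity via integration by parts. The only cosmetic difference is the order of operations---the paper first applies the product rule $\Delta_Y(a_2\Psi)=a_2\Delta_Y\Psi+\Psi\Delta_Y a_2-2g_Y(da_2,d\Psi)$ and then integrates by parts once on $\int \Psi^2 a_1\Delta_Y a_2$, whereas you apply Green's formula first to $\int a_1\Psi\,\Delta_Y(a_2\Psi)$ and then again to the $|d\Psi|^2$ term; the cancellations are the same.
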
 

\begin{proof}
We expand in a chosen basis of eigenfunctions: 
\begin{eqnarray} \label{goodold}
\tr(a_1[\Delta_Y,a_2]\Delta_Y^{-s}) &=& \sum_{\Psi_Y} \langle \Psi_Y | a_1[\Delta_Y,a_2]\Delta_Y^{-s} | \Psi_Y \rangle \\ 
&=& \sum_{\lambda \neq 0} \lambda^{-s} \sum_{\Psi_Y \vdash \lambda} \int_Y \left( \Psi_Y a_1 \Delta_Y(a_2 \Psi_Y) - a_1 a_2 \lambda(\Psi_Y)^2 \right) \, d\mu_Y, \nn
\end{eqnarray}
and use the product rule for the Laplacian to find 
\begin{eqnarray} \label{abcd}
& & \tr(a_1[\Delta_Y,a_2]\Delta_Y^{-s}) \\ & & =\sum_{\lambda \neq 0} \lambda^{-s} \sum_{\Psi_Y \vdash \lambda} \int_Y \left( \Psi_Y a_1 \Delta_Y(a_2) \Psi_Y - 2 a_1 g_Y(da_2,d\Psi_Y) \Psi_Y \right)\, d\mu_Y.\nn \end{eqnarray}
Now apply the divergence theorem to simplify the first summand in the integral: 
\bea 
\int_Y \Psi^2_Y a_1 \Delta_Y(a_2) \, d\mu_Y &=&  \int_Y g_Y(d(a_1 \Psi^2_Y),da_2)\, d\mu_Y \nn \\ &=&  \int_Y \Psi_Y^2 g_Y(da_1,da_2) + 2 \int a_1 \Psi_Y g_Y(d\Psi_Y,da_2)\, d\mu_Y. \nn
\eea
So we finally get 
$$ \tr(a_1[\Delta_Y,a_2]\Delta_Y^{-s}) = \sum_{\lambda \neq 0} \lambda^{-s} \sum_{\Psi_Y \vdash \lambda}  \int_Y \Psi_Y^2 g_Y(da_1,da_2)\, d\mu_Y = \tr(g(da_1,da_2) \Delta_Y^{-s}). $$
\end{proof}

\begin{lemma}\label{higs} The series $\zeta_{X,a_0}$ and $\zeta_{X,a_1,a_2}$ converge for $\Re(s)>\frac{\dim(X)}{2}$ and can be extended  to a meromorphic function on $\C$ with at most simple poles at $\frac12(\dim(X)-\Z_{\geqslant  0})$. \qed
\end{lemma}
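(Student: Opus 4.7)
The plan is to reduce everything to the standard meromorphic continuation of $\mathrm{tr}(P e^{-t\Delta_X})$ via a Mellin transform, where $P$ is a zeroth order multiplication operator.

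First I would treat the one-variable case $\zeta_{X,a_0}$. For convergence in the half-plane $\Re(s)>\dim(X)/2$, I would use the eigenfunction expansion \eqref{zetaexpansion}: the coefficient $\sum_{\Psi\vdash\lambda}\int_X a_0\Psi^2\,d\mu_X$ is bounded by $\|a_0\|_\infty$ times the multiplicity of $\lambda$, and Weyl's law $\lambda_n\sim C_X n^{2/\dim X}$ then gives absolute convergence. For meromorphic continuation, let $P_0$ be the orthogonal projection onto $\ker\Delta_X$ (the constants, since $X$ is connected) and write, for $\Re(s)\gg 0$,
\begin{equation*}
\zeta_{X,a_0}(s)=\frac{1}{\Gamma(s)}\int_0^\infty t^{s-1}\,\mathrm{tr}\bigl(a_0(e^{-t\Delta_X}-P_0)\bigr)\,dt.
\end{equation*}
Split the integral at $t=1$. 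The tail $\int_1^\infty$ is entire in $s$ by exponential decay of $\mathrm{tr}(a_0(e^{-t\Delta_X}-P_0))$ (the smallest nonzero eigenvalue drives the bound).

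The main input is the Minakshisundaram--Pleijel heat kernel expansion with an insertion: on the diagonal, $K_X(t,x,x)\sim (4\pi t)^{-n/2}\sum_{k\geq 0} u_k(x)t^k$ as $t\downarrow 0$, uniformly in $x$, where $n=\dim X$ and $u_k\in C^\infty(X)$ are the Seeley--DeWitt coefficients. Integrating against $a_0\,d\mu_X$ gives
\begin{equation*}
\mathrm{tr}(a_0 e^{-t\Delta_X})\sim \sum_{k\geq 0} c_k(a_0)\, t^{(k-n)/2},\qquad c_k(a_0)=(4\pi)^{-n/2}\int_X a_0 u_{k/2}\,d\mu_X
\end{equation*}
(with the convention that $u_j=0$ for $j\notin \Z_{\geq 0}$). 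Subtracting the contribution of $P_0$ only modifies the $t^0$ term. Plugging the expansion into $\int_0^1 t^{s-1}(\cdots)\,dt$ and integrating term by term yields a finite sum $\sum_{k<N} c_k(a_0)/(s+(k-n)/2)$ plus a piece holomorphic for $\Re(s)>(n-N)/2$; dividing by $\Gamma(s)$ (which has no zeros) gives meromorphic continuation to $\C$ with simple poles contained in $\{(n-k)/2:k\in\Z_{\geq 0}\}=\tfrac12(n-\Z_{\geq 0})$, as required.

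For the two-variable zeta $\zeta_{X,a_1,a_2}$, I would just invoke Lemma~\ref{2to1}, which identifies it with $\zeta_{X,g_X(da_1,da_2)}(s)$; since $g_X(da_1,da_2)\in C^\infty(X)$, the one-variable result applies verbatim. The only nontrivial step is the heat kernel asymptotic with an $a_0$ insertion, but this is standard (e.g.\ Gilkey); all other manipulations—Mellin transform, splitting at $t=1$, term-by-term integration—are routine once that asymptotic is in hand.
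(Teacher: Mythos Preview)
Your argument is correct. The Mellin-transform-plus-heat-expansion route is the classical Minakshisundaram--Pleijel mechanism, and your reduction of the two-variable case to the one-variable case via Lemma~\ref{2to1} is clean.

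The paper takes a slightly different route: it quotes a theorem of Higson asserting that for any smooth differential operator $D$ of order $q$, $\tr(D\Delta_X^{-s})$ extends meromorphically with at most simple poles in $\tfrac12(\dim X+q-\Z_{\geq 0})$. For $\zeta_{X,a_0}$ this is applied with $q=0$. For $\zeta_{X,a_1,a_2}$ the paper applies Higson with $q=1$ (since $a_1[\Delta_X,a_2]$ is first order), which a priori allows a pole at $\tfrac12(\dim X+1)$; Lemma~\ref{2to1} is then invoked only to rule out that extra pole. Your approach instead uses Lemma~\ref{2to1} up front to reduce entirely to the $q=0$ case, so you never need the first-order version of the black-box result. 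The trade-off: the paper's proof is a two-line citation, while yours is self-contained and makes the pole structure visible through the heat coefficients (and, incidentally, shows that for the scalar Laplacian the poles actually lie in the smaller set $\tfrac{\dim X}{2}-\Z_{\geq 0}$, since the diagonal expansion has only integer powers of $t$).
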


\begin{proof}
See Higson \cite{Higsonzeta}, Thm.\ 2.1 for a more general statement that for a smooth linear partial differential operator $D$ of order $q$ on $X$, $\tr(D \Delta^{-s})$ has at most simple poles at $\frac12(\dim(X)+q-\Z_{\geqslant  0})$. For $\zeta_{X,a_0}$, we have $q=0$ and the statement follows; for $\zeta_{X,a_1,a_2}$, we have $q=1$, but from the previous lemma it follows that there is no pole at $\frac12(\dim(X)+1)$.
\end{proof}

\begin{lemma} \label{diffinv} The zeta-functions $\zeta_{X,a_0}$ and $\zeta_{X,a_1,a_2}$ are diffeomorphism invariants, in the sense that if $\varphi \, : X \rightarrow X$ is a smooth diffeomorphism, then 
$$ \zeta_{X,g_X,a_0} = \zeta_{X,\varphi^*(g_X),\varphi^*(a_0)}, $$
and$$ \zeta_{X,g_X,a_1,a_2} = \zeta_{X,\varphi^*(g_X),\varphi^*(a_1), \varphi^*(a_2)}. $$  \end{lemma}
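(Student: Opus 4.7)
The plan is to exhibit a unitary operator intertwining the operators on the two Hilbert spaces, and then to apply conjugation invariance of the trace. Nothing deeper is needed; this is a naturality (functoriality) statement.

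First, I would observe that by the very definition of the pullback metric, $\varphi \, : \, (X, \varphi^{\ast} g_X) \to (X, g_X)$ is an isometry of Riemannian manifolds. In particular $\varphi$ pushes the Riemannian volume $\mu_{\varphi^{\ast} g_X}$ to $\mu_{g_X}$, so the change-of-variables formula shows that the map
$$ U \, : \, L^2(X, \mu_{g_X}) \longrightarrow L^2(X, \mu_{\varphi^{\ast} g_X}), \qquad U f := \varphi^{\ast} f = f \circ \varphi, $$
is isometric; since $\varphi$ is a diffeomorphism, $U$ is unitary with inverse $f \mapsto f \circ \varphi^{-1}$.

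Next, I would verify the intertwining properties. Because the Laplace–Beltrami operator is built naturally from the metric, isometry invariance gives $\varphi^{\ast} \circ \Delta_{g_X} = \Delta_{\varphi^{\ast} g_X} \circ \varphi^{\ast}$ on $C^{\infty}(X)$, i.e.\ $U \Delta_{g_X} U^{-1} = \Delta_{\varphi^{\ast} g_X}$, and hence by the spectral calculus $U \Delta_{g_X}^{-s} U^{-1} = \Delta_{\varphi^{\ast} g_X}^{-s}$ for $\Re(s)$ large (where the operator is trace-class). For multiplication operators $m_{a}$ one has $U \circ m_{a} \circ U^{-1} = m_{\varphi^{\ast}(a)}$, because $(U(af))(x) = a(\varphi(x))\,f(\varphi(x)) = \varphi^{\ast}(a)(x)\cdot (Uf)(x)$. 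Composing these identities,
$$ U \circ \bigl( a_0 \Delta_{g_X}^{-s} \bigr) \circ U^{-1} = \varphi^{\ast}(a_0)\, \Delta_{\varphi^{\ast} g_X}^{-s}, $$
and
$$ U \circ \bigl( a_1[\Delta_{g_X},a_2] \Delta_{g_X}^{-s} \bigr) \circ U^{-1} = \varphi^{\ast}(a_1)\bigl[\Delta_{\varphi^{\ast} g_X},\varphi^{\ast}(a_2)\bigr] \Delta_{\varphi^{\ast} g_X}^{-s}. $$

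Taking operator traces on both sides and using that the trace is invariant under unitary conjugation yields the two required identities for $\Re(s)$ sufficiently large. By Lemma \ref{higs} both sides extend meromorphically to all of $\C$, and the identities persist on their common domain of definition. The only subtlety to check is that $U$ really is unitary, which is where the definition of the pullback metric (and the change-of-variables formula for Riemannian volume) enters; no genuine obstacle arises.
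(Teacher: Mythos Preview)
Your proof is correct and follows essentially the same approach as the paper: both exploit that $\varphi \colon (X,\varphi^{\ast}g_X) \to (X,g_X)$ is an isometry, so pullback is a unitary intertwining the Laplacians, and then conclude by trace invariance. The only cosmetic difference is that the paper carries this out explicitly in an eigenbasis (and for the two-variable case reduces to the one-variable statement via Lemma~\ref{2to1}), whereas you invoke conjugation invariance of the trace directly on the operator $a_1[\Delta,a_2]\Delta^{-s}$; your packaging is arguably cleaner but not substantively different.
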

\begin{proof} 
The map $\varphi$ is a Riemannian isometry $(X,g_X) \rightarrow (X,\varphi^*(g_X))$ and hence $$\varphi^{*} \Delta_{X,g_X} = \Delta_{X,g_X*} \varphi^{*}$$ and $\varphi$ preserves integrals, i.e., $$\int f d\mu_{g_X} = \int f^{*} d\mu_{g_X^{*}}$$ (cf.\ also Lemma \ref{Watson}). Hence $\varphi^{*}$ sends normalized eigenfunctions to normalized eigenfunctions with the same eigenvalue, so that we have:
\beastar
\zeta_{X,g_{X},a_{0}}(s) & = & \sum_{\lambda \neq 0} \lambda^{-s} \sum_{\Psi \vdash \lambda} \langle \Psi | a_{0} | \Psi \rangle_{g_{X}} \\
& = & \sum_{\lambda \neq 0} \lambda^{-s} \sum_{\Psi \vdash \lambda}  \langle \Psi^{*} | a_{0}^{*} | \Psi^{*} \rangle_{g^{*}_{X}} \\
& = & \zeta_{X,g^{*}_{X},a_0^*}(s).
\eeastar
For the 2-variable version, note that by assumption $g(da_{1},da_{2})^{*} =g(da_{1}^{*},da_{2}^{*})$ (see \cite{Watson}) and hence the invariance follows from that of the one-variable version by Lemma \ref{2to1}.
\end{proof}

\begin{remarks}

\mbox{}

- Observe that in conditions (a) and (b) in the main theorem, we only pull back the functions $a_i$, not the Riemannian structure with corresponding Laplace operator, so the identities in (a) and (b) are in general non-void. For diffeomorphism invariance in Lemma \ref{diffinv}, however, we pull back all structure, including the Laplace operator. 

- The spectrum (viz., $\zeta_X(s)$) is an \emph{incomplete} invariant of a Riemannian manifold: this is the problem of isospectrality. Connes (\cite{ConnesInv}) described a complete diffeomorphism invariant of a Riemannian manifold by adding to the spectrum the ``relative spectrum'' (viz., the relative position of two von Neumann algebras in Hilbert space). In another direction, B\'erard, Besson and Gallot (\cite{BBG}) gave a faithful embedding of Riemannian manifolds into $\ell^2(\Z)$, but by ``wave functions'', which are not diffeomorphism invariant. The family of zeta functions introduced here is some kind of diffeomorphism invariant when the $C^{\infty}$-diffeomorphism type of the manifold is fixed (viz., the algebras of functions $C^{\infty}(X)$ is given): these algebras of functions are used as ``labels'' for the zeta functions.  We do not know whether the \emph{sets} of functions $\zeta_{X,a_0}, \tzeta_{X,a_1}$ (without an explicit labeling) determine the isometry type of the manifold. 

- Using eigenvalues (viz, $\zeta_{X}(s)$) as dynamical variables in gravity was brought up by Landi and Rovelli (\cite{LR1}, \cite{LR2}). It would be interesting to adapt their theory by using  \emph{all} zeta functions.

\end{remarks}
 
 \section{A residue computation---Proof of Theorem \ref{Z0thm}} \label{mmm}
 
 The fact that (ii) implies (i) is easy, using the following lemma (see \cite{Watson}): 

\begin{lemma} \label{Watson}
Suppose that $\varphi : X \rightarrow Y$ is a smooth diffeomorphism of closed Riemannian manifolds. Let $$U=\varphi^* \; : \; L^2(Y) \rightarrow L^2(X)$$ denote the induced pullback map. Then $\varphi$ is a Riemannian isometry if and only if $U$ is a unitary operator that intertwines the Laplace operators on smooth functions: $\Delta_XU =U \Delta_Y$. \qed
\end{lemma}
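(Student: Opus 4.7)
The forward implication is straightforward: if $\varphi$ is an isometry it pulls back the metric tensor and hence the induced volume form, so change of variables gives $\|\varphi^*f\|_{L^2(X)}=\|f\|_{L^2(Y)}$ and $U=\varphi^*$ is unitary; since the Laplace--Beltrami operator is built intrinsically from the metric, pullback by an isometry automatically intertwines $\Delta_X$ and $\Delta_Y$ on smooth functions.

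For the converse, I would first extract volume preservation from unitarity: applying $\|\varphi^* f\|_{L^2(X)}^2 = \|f\|_{L^2(Y)}^2$ together with the classical change-of-variables formula to a rich enough family of test functions forces the Jacobian $\varphi^* d\mu_Y / d\mu_X$ to equal $1$, hence $\varphi^* d\mu_Y = d\mu_X$. Next, I would combine the intertwining $\Delta_X \varphi^* = \varphi^* \Delta_Y$ with the Green/Dirichlet identity $\langle f, \Delta h\rangle_{L^2} = \int g(df,dh)\,d\mu$, valid on any closed Riemannian manifold: for $f, h \in C^{\infty}(Y)$ this yields
$$
\int_X g_X(d\varphi^*f, d\varphi^*h)\,d\mu_X = \langle \varphi^*f, \varphi^*\Delta_Y h\rangle_{L^2(X)} = \langle f, \Delta_Y h\rangle_{L^2(Y)} = \int_Y g_Y(df, dh)\,d\mu_Y.
$$
Changing variables on the right by volume preservation rewrites it as $\int_X (\varphi^*g_Y)(d\varphi^*f, d\varphi^*h)\,d\mu_X$, and since every smooth function on $X$ is of the form $\varphi^*f$, the symmetric contravariant $2$-tensor $T$ on $X$ defined by $T(du_1,du_2) := g_X(du_1,du_2) - (\varphi^*g_Y)(du_1,du_2)$ satisfies $\int_X T(du_1,du_2)\,d\mu_X = 0$ for all $u_1, u_2 \in C^{\infty}(X)$.

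The remaining and only delicate step is to deduce $T \equiv 0$ pointwise. Fixing $u_2$ and integrating by parts in $u_1$ rewrites the identity as $\int_X u_1 \cdot \mathrm{div}(V)\,d\mu_X = 0$ for every $u_1$, where $V$ is the vector field obtained by contracting $T$ with $du_2$ (in coordinates $V^i = T^{ij}\partial_j u_2$); hence $\mathrm{div}(V) \equiv 0$ for every choice of $u_2$. At a fixed point $x_0 \in X$, I would use normal coordinates together with a cut-off to choose $u_2$ with $du_2(x_0)=0$ and $\nabla^2 u_2(x_0)=H$ an arbitrary prescribed symmetric matrix. All terms in $\mathrm{div}(V)(x_0)$ that involve first derivatives of $u_2$ then vanish and what remains collapses to $T^{ij}(x_0)H_{ij}=0$; letting $H$ range over all symmetric matrices forces $T(x_0)=0$, so $g_X = \varphi^*g_Y$ and $\varphi$ is an isometry. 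The hardest part is precisely this final localisation: one must simultaneously make $du_2$ vanish and $\nabla^2 u_2$ arbitrary at $x_0$, which is clear via normal coordinates, and then confirm that the resulting algebraic identity is strong enough to force a symmetric $2$-tensor to vanish.
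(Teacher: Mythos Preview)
The paper does not prove this lemma itself: it is stated with a terminal \qed and the attribution ``see \cite{Watson}''. Your argument is correct and self-contained, so there is no gap to report.

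It is worth noting, though, that the route implicit in Watson's paper (and invoked again by the authors around equation~(\ref{used})) is shorter than yours. By naturality of the Laplace--Beltrami operator under diffeomorphisms one has $\varphi^{*}(\Delta_{Y}f)=\Delta_{\varphi^{*}g_{Y}}(\varphi^{*}f)$, so the intertwining hypothesis alone says $\Delta_{g_{X}}=\Delta_{\varphi^{*}g_{Y}}$ as second-order differential operators on $C^{\infty}(X)$. Equating principal symbols immediately gives $g_{X}^{-1}=(\varphi^{*}g_{Y})^{-1}$ pointwise, hence $g_{X}=\varphi^{*}g_{Y}$; unitarity is then a consequence rather than an ingredient. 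Your approach instead routes the comparison through the Dirichlet form, extracts volume preservation from unitarity, integrates by parts to obtain a divergence-free condition, and finally localises via normal coordinates and a Hessian test to kill the symmetric tensor $T$. That is more laborious, but it has the merit of being entirely elementary---no symbol calculus---and of isolating exactly where each hypothesis (unitarity vs.\ intertwining) enters.
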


\begin{remark}
If we don't assume that $U$ arises as actual pullback from a map, then the existence of such a $U$ merely implies that $X$ and $Y$ are isospectral, cf.\ also the discussion in \cite{ZelditchCFO}. 
\end{remark}

\begin{se}[Proof of (ii) $\Rightarrow$ (i) in Theorem \ref{Z0thm}] Pull-back by $\varphi$ induces a unitary transformation $U$ between $L^2(Y)$ and $L^2(X)$ that intertwines the respective Laplace operators.  From this intertwining, we find that for every $\lambda$, $U \Psi_{Y,\lambda}$ is a normalized eigenfunction of eigenvalue $\lambda$. From (\ref{zetaexpansion}), we get that $\zeta_{Y,a_0}(s)=\zeta_{X,a_0^*}(s)$ for all functions $a \in C^{\infty}(Y)$, and similarly for the two-variable version (cf.\ proof of Lemma \ref{diffinv}). \qed
\end{se} 

\bigskip 

For the other, more interesting direction of the proof, we first present a short and formal argument, by computing suitable residues of the zeta functions. In later sections, we will also compare expansion coefficients in the region of absolute convergence, rather than residues. This is computationally convenient, and it will allow us to prove some of the ``harder'' statements in Theorem \ref{extraprop}. 
 
 \begin{notation}
If $\varphi \, : \, X \rightarrow Y$ is a smooth diffeomorphism of Riemannian manifolds, we denote by $w_\varphi$ the change of the volume element by the map $\varphi$ (Radon-Nikodym derivative), i.e., locally in a chart, 
$$ w_\varphi = |\det(J_\varphi)| \sqrt{\det(g_Y)/\det(g_X)}, $$
where $J_\varphi$ is the jacobian matrix of $\varphi$ (sometimes, $w_\varphi$ is called the jacobian of $\varphi$). We then have the change of variables formula
\bea \label{chvar}
\int_{Y}  a_0 d\mu_Y & = & \int_{X} a_0^* w_\varphi d\mu_X,
\eea
for any function $a_0 \in C^{\infty}(Y)$. 
\end{notation}
 
 \begin{lemma}[e.g., \cite{Gilkey}, Lemma 1.3.7 and Thm.\ 3.3.1(1)] \label{residue}
Let $X$ denote a closed $d$-dimensional Riemannian manifold, $d>0$. Then for $a_{0} \in C^{\infty}(Y)$ the function $\Gamma(s) \zeta_{X,a_0}(s)$ has a simple pole at $d/2$  with residue 
\bea
\Res_{s=\frac{d}{2}} \zeta_{X,a_0} = \frac{1}{\Gamma(\frac d2)(4 \pi)^{d/2}} \int_X a_0\, d\mu_X. \qed \nn
\eea
\end{lemma}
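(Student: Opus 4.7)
The plan is to proceed via the Mellin transform of the heat trace, which is the standard route to this sort of residue statement. The key identity is that, for $\Re(s)$ large,
\begin{equation*}
\Gamma(s)\,\zeta_{X,a_0}(s) = \int_0^{\infty} t^{s-1} \Tr\bigl(a_0 \, e^{-t\Delta_X'}\bigr)\, dt,
\end{equation*}
where $\Delta_X'$ denotes $\Delta_X$ restricted to the orthogonal complement of its kernel (the constants), so that the zero eigenvalue contributes nothing. I would first justify this by applying $\Gamma(s) \lambda^{-s} = \int_0^\infty t^{s-1} e^{-t\lambda} dt$ term by term and appealing to absolute convergence for $\Re(s) > d/2$, together with the positivity of the heat kernel.

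Next I would split the integral as $\int_0^1 + \int_1^{\infty}$. The tail $\int_1^{\infty}$ converges to an entire function of $s$ because $\Tr(a_0 e^{-t\Delta_X'})$ decays exponentially as $t \to \infty$ (controlled by the first nonzero eigenvalue), so it contributes nothing to any residues. All poles come from the small-$t$ piece, and are produced by the Minakshisundaram--Pleijel expansion
\begin{equation*}
K_X(t,x,x) \;\sim\; (4\pi t)^{-d/2}\sum_{k=0}^{\infty} t^{k}\, u_k(x), \qquad t \downarrow 0,
\end{equation*}
with $u_0(x) \equiv 1$. Inserting this in $\Tr(a_0 e^{-t\Delta_X}) = \int_X a_0(x) K_X(t,x,x)\, d\mu_X(x)$ (and noting that subtracting the zero-mode contribution only perturbs the expansion by a smooth function of $t$ at the origin) yields
\begin{equation*}
\Tr\bigl(a_0 e^{-t\Delta_X'}\bigr) \;\sim\; (4\pi t)^{-d/2}\sum_{k=0}^{\infty} t^{k}\!\int_X a_0\, u_k\, d\mu_X \quad (t\downarrow 0).
\end{equation*}

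Term-by-term Mellin transformation of the finite-order truncation against $t^{s-1}$ on $[0,1]$ produces simple poles at $s = d/2 - k$ for $k \in \Z_{\geq 0}$, with the residue at $s=d/2$ coming solely from the $k=0$ term:
\begin{equation*}
\Res_{s=d/2}\, \Gamma(s)\,\zeta_{X,a_0}(s) \;=\; \frac{1}{(4\pi)^{d/2}} \int_X a_0\, d\mu_X.
\end{equation*}
Since $\Gamma(s)$ is holomorphic and nonzero at $s=d/2$, this is equivalent to $\zeta_{X,a_0}$ itself having a simple pole at $d/2$ with the residue asserted in the lemma after dividing by $\Gamma(d/2)$.

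No step here is truly difficult; the only delicate point is controlling the error of truncating the asymptotic expansion. I would handle this by the usual Gilkey-style argument: truncating the Minakshisundaram--Pleijel expansion at order $N$ gives an error $O(t^{N+1-d/2})$ uniformly on $X$, so its Mellin transform is holomorphic on $\Re(s) > d/2 - N - 1$, which suffices to isolate the residue at $d/2$. The computation is therefore essentially formal once the Minakshisundaram--Pleijel expansion is accepted, as the reference to \cite{Gilkey} in the statement already indicates.
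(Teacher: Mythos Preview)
Your argument is correct and is precisely the standard Mellin-transform/heat-kernel route recorded in Gilkey's book. Note that the paper itself does not supply a proof of this lemma: the statement carries a \qed\ and simply cites \cite{Gilkey}, Lemma 1.3.7 and Thm.\ 3.3.1(1), so your write-up is in effect a reconstruction of the referenced argument rather than an alternative to anything in the paper.
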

 
 \begin{lemma} \label{res2} Let $X$ be a closed $d$-dimensional Riemannian manifold, $d>0$. For any $a_1,a_2 \in C^{\infty}(X)$ we have
 $$ \mbox{Res}_{s=d/2} \zeta_{X,a_1,a_2} = \frac{1}{\Gamma(\frac d2)(4 \pi)^{d/2}}  \int_X g_X(da_1,da_2)\, d\mu_X. $$
 \end{lemma}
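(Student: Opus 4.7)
The plan is to reduce the statement directly to Lemma \ref{residue} by way of the identity supplied by Lemma \ref{2to1}. That lemma already expresses the two-variable zeta function in one-variable form:
$$ \zeta_{X,a_1,a_2}(s) = \zeta_{X,\,g_X(da_1,da_2)}(s), $$
so it is enough to feed the smooth function $a_0 := g_X(da_1,da_2)$ into the residue formula of Lemma \ref{residue} and read off the answer.

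Before invoking Lemma \ref{residue} I would check the trivial point that $g_X(da_1,da_2)$ is genuinely a member of $C^{\infty}(X)$: in local coordinates it reads $g_X^{ij}\,\partial_i a_1\,\partial_j a_2$, which is a smooth expression in the entries of the (pointwise invertible, hence smooth) inverse metric and the first partial derivatives of the $a_i$. Since $g_X$, $a_1$ and $a_2$ are smooth by hypothesis, so is their contraction; thus $g_X(da_1,da_2) \in C^{\infty}(X)$ is a legitimate choice of $a_0$ in Lemma \ref{residue}.

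Applying Lemma \ref{residue} to this $a_0$ yields
$$ \Res_{s=d/2} \zeta_{X,\,g_X(da_1,da_2)} = \frac{1}{\Gamma(\tfrac{d}{2})(4\pi)^{d/2}} \int_X g_X(da_1,da_2)\, d\mu_X, $$
and combining this with the previous identity gives the claim. Since $\Gamma$ is regular and non-vanishing at $s=d/2$ for $d>0$, there is no subtlety in passing between the pole of $\Gamma(s)\zeta_{X,a_0}(s)$ and that of $\zeta_{X,a_0}(s)$ itself.

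There is essentially no obstacle here: the result is a formal corollary of Lemmas \ref{2to1} and \ref{residue} composed together. The only substantive content is the reduction to a one-variable zeta function, which was already carried out in the proof of Lemma \ref{2to1} via integration by parts and the product rule for $\Delta$; the residue computation itself is then just a plug-in.
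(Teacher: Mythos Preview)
Your proof is correct and follows exactly the paper's own argument: the paper's proof reads in its entirety ``Follows from the previous Lemma and Lemma \ref{2to1},'' which is precisely the reduction you carry out. Your added remarks (that $g_X(da_1,da_2)$ is smooth, and that $\Gamma$ is regular and non-vanishing at $d/2$) are sound clarifications of points the paper leaves implicit.
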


\begin{proof}
Follows from the previous Lemma and Lemma \ref{2to1}. 
\end{proof}

\begin{proof}[First proof of Theorem \ref{Z0thm} \textup{((i)} $\Rightarrow$ \textup{(ii))}]

\begin{lemma} \label{basechange} The map $\varphi$ has $w_\varphi=1$.\end{lemma}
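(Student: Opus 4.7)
The plan is to exploit only condition \textup{(i)(a)} of Theorem \ref{Z0thm}, namely $\zeta_{Y,a_0}=\zeta_{X,a_0^*}$ for every $a_0\in C^\infty(Y)$, and to extract from it the identity $w_\varphi\equiv 1$ by comparing residues at the rightmost pole.

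First, I would apply Lemma \ref{residue} on both sides of the equality $\zeta_{Y,a_0}=\zeta_{X,a_0^*}$. Since $\varphi$ is a diffeomorphism, $X$ and $Y$ have the same dimension $d$, so both zeta functions have their rightmost (simple) pole at $s=d/2$. Equating residues there gives
\[
\frac{1}{\Gamma(d/2)(4\pi)^{d/2}}\int_Y a_0\, d\mu_Y
\;=\;\frac{1}{\Gamma(d/2)(4\pi)^{d/2}}\int_X a_0^*\, d\mu_X,
\]
that is, $\int_Y a_0\, d\mu_Y = \int_X a_0^*\, d\mu_X$ for every $a_0\in C^\infty(Y)$.

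Next, I would rewrite the left-hand side using the change of variables formula (\ref{chvar}), which states $\int_Y a_0\, d\mu_Y = \int_X a_0^* w_\varphi\, d\mu_X$. Subtracting yields
\[
\int_X a_0^*\,(w_\varphi-1)\, d\mu_X = 0\qquad\text{for all } a_0\in C^\infty(Y).
\]
Since $\varphi$ is a smooth diffeomorphism, pull-back $a_0\mapsto a_0^*$ is a bijection between $C^\infty(Y)$ and $C^\infty(X)$, so the identity above holds for every test function in $C^\infty(X)$. The function $w_\varphi-1$ is continuous on the closed manifold $X$, so the usual fundamental lemma of the calculus of variations (or equivalently, density of $C^\infty(X)$ in $L^2(X,\mu_X)$) forces $w_\varphi-1\equiv 0$ pointwise.

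There is essentially no obstacle to this argument: the only substantive input is the residue formula of Lemma \ref{residue}, and everything else is bookkeeping. The genuine difficulty of Theorem \ref{Z0thm}\,(i)$\Rightarrow$(ii) is pushed into the next step, where one uses condition \textup{(b)} together with Lemma \ref{res2} to recover the metric tensor $g_Y$ from $\varphi^*g_Y$; Lemma \ref{basechange} just handles the volume form.
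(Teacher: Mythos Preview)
Your proof is correct and follows essentially the same route as the paper: equate residues at $s=d/2$ via Lemma~\ref{residue}, use the change of variables formula~(\ref{chvar}), and invoke the fundamental lemma of the calculus of variations. The paper additionally remarks that $a_0=1$ gives equal volumes, but this is an aside rather than a needed step, and your explicit observation that $\varphi^*$ is a bijection $C^\infty(Y)\to C^\infty(X)$ is a welcome clarification.
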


 \begin{proof}
It follows from $\zeta_{Y,a_0} = \zeta_{X,a_0^*}$  by taking residues that
$$ \Res_{s=\frac{d}{2}} \zeta_{Y,a_0}(s) = \Res_{s=\frac{d}{2}} \zeta_{X,a_0^*}(s). $$
At $a_0=1$, we find that $X$ and $Y$ have the same volume, and then by Lemma \ref{residue}, the general equality of residues  becomes
\bea \int_Y a_0 \, d\mu_Y = \int_X a_0^*\, d\mu_X. \eea The change of variables formula (\ref{chvar}) implies 
\bea 
\int_X a_0^* (1-w_\varphi)\, d\mu_X = 0 \ \ \ \  (\forall a_0^* \in C^{\infty}(X)). \nn
\eea
and the fundamental lemma of the calculus of variations gives that  \bea w_\varphi=1. \eea
\end{proof}

By using the polarisation identity for the quadratic form $g$, we see that 
\begin{eqnarray*} 4\zeta_{X,a^*_1,a^*_2} &=& 4\zeta_{X,g_X(da^*_1,da^*_2)} \\ &=&\zeta_{X,g_X(d(a^*_1+a^*_2),d(a^*_1+da^*_2))}-\zeta_{X,g_X(d(a^*_1-a^*_2),d(a^*_1-da^*_2))}\\ &=&\tzeta_{X,(a_1+a_2)^*} - \tzeta_{X,(a_1-a_2)^*} \\ &=& \tzeta_{Y,(a_1+a_2)} - \tzeta_{Y,(a_1-a_2)} \\ &=& 4 \zeta_{Y,a_1,a_2} \end{eqnarray*}
for all $a_1,a_2 \in C^{\infty}(Y)$. 

From Lemma \ref{res2}, we then get
$$\int_Y g_Y(da_1,da_2)\, d\mu_Y = \int_X g_X(da^*_1,da^*_2)\, d\mu_X. $$
After base change (using $w_\varphi=1$), the previous equality of integrals gives
\bea \label{mw} \int_X a_{1}^{*} \left( (\Delta_{Y} (a_{2}))^{*} - \Delta_{X}(a_{2}^{*})) \right) d\mu_X  =  \int_X a_{1}^{*} \left( \Delta_{Y}^{*} - \Delta_{X} \right)( a_{2}^{*} )d\mu_X \eea 
for all $a_1,a_2 \in C^{\infty}(Y)$.
Here, $\Delta_{Y}^{*} = U \Delta_{Y} U^{*}$ with $$U=\varphi^* \, : \, L^{2}(Y) \rightarrow L^{2}(X)$$ the pullback,  and $U^{*}$ the push-forward. Since this holds for all $a_1^*$, we find that 
$$ \Delta_{Y}^{*} = \Delta_{X}. $$
Also, $U$ is unitary, since $w_\varphi=1$, whence $$\langle Uf, Ug \rangle_X = \int_X f^* g^* w_\varphi d\mu_X = \int_Y fg d\mu_Y = \langle f, g \rangle_Y $$ for all $f,g \in L^{2}(Y)$.
Hence from Lemma \ref{Watson}, we find that $\varphi$ is an isometry. 
\end{proof}

\section{Matching squared eigenfunctions} \label{sq}

In this section, we investigate more closely the meaning of condition (a) in the main theorem. 

\begin{notation}
We use the following notation for the sum of the squares of the eigenfunctions belonging to a fixed eigenvalue and basis:
$$\sigma_{X,\lambda}=\sigma_\lambda:=\sum_{\Psi_X \vdash \lambda} \Psi_X^2 . $$
\end{notation}

\begin{proposition} \label{squares0}
Suppose that $\varphi \, : \, X \rightarrow Y$ is a smooth diffeomorphism between connected closed Riemannian manifolds. Let $\{\Psi_{X,\lambda}\}$ and $\{\Psi_{Y,\mu}\}$ denote two complete sets of orthonormal real eigenfunctions for $\Delta_X$ and $\Delta_Y$, respectively. Condition (a) in Theorem \ref{Z0thm} is equivalent to the statement that the spectra of $\Delta_X$ and $\Delta_Y$ agree with multiplicities, we have $w_\varphi=1$, and for any eigenvalue $\lambda$ we have $$ \sigma_{X,\lambda} := \sum_{\Psi_X \vdash \lambda} (\Psi_X)^2=\sum_{\Psi_Y \vdash \lambda} (\Psi^*_Y)^2  =: \sigma_{Y,\lambda}^*.$$
\end{proposition}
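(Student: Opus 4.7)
My plan is to translate condition (a) into a coefficient-by-coefficient identity of generalized Dirichlet series and then apply the fundamental lemma of the calculus of variations term by term. The ``if'' direction will be immediate from the expansion (\ref{zetaexpansion}): once the spectra match with multiplicities, $w_\varphi = 1$, and $\sigma_{X,\lambda} = \sigma_{Y,\lambda}^*$, the change of variables formula (\ref{chvar}) transforms $\int_Y a_0\,\sigma_{Y,\lambda}\, d\mu_Y$ into $\int_X a_0^*\,\sigma_{X,\lambda}\, d\mu_X$ for each $\lambda$, and summing these with weights $\lambda^{-s}$ yields $\zeta_{Y,a_0}(s) = \zeta_{X,a_0^*}(s)$.

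For the forward direction I would proceed in three steps. First, specialising condition (a) to $a_0 = 1_Y$ (so $a_0^* = 1_X$) gives $\zeta_Y = \zeta_X$, which, by the uniqueness of coefficients of an ordinary Dirichlet series with positive real frequencies (the first remark following Theorem \ref{Z0thm}), yields $\Lambda_X = \Lambda_Y$ with multiplicities. Second, Lemma \ref{basechange}, whose proof depends only on the residue of condition (a) at $s = d/2$, supplies $w_\varphi = 1$.

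Third, I would rewrite condition (a) using the expansion (\ref{zetaexpansion}) as
\[ \sum_{\lambda \in \Lambda_Y - \{0\}} \lambda^{-s} \int_Y a_0\,\sigma_{Y,\lambda}\, d\mu_Y \;=\; \sum_{\lambda \in \Lambda_X - \{0\}} \lambda^{-s} \int_X a_0^*\,\sigma_{X,\lambda}\, d\mu_X, \]
group both sides by the common spectrum, and invoke the uniqueness theorem once more to extract, for every fixed $\lambda \in \Lambda_X - \{0\}$ and every $a_0 \in C^\infty(Y)$, the identity $\int_Y a_0\,\sigma_{Y,\lambda}\, d\mu_Y = \int_X a_0^*\,\sigma_{X,\lambda}\, d\mu_X$. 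Applying (\ref{chvar}) with $w_\varphi = 1$ to the left-hand side rewrites this as $\int_X a_0^*\,(\sigma_{Y,\lambda}^* - \sigma_{X,\lambda})\,d\mu_X = 0$ for all $a_0^* \in C^\infty(X)$, and the fundamental lemma of the calculus of variations gives the desired pointwise equality $\sigma_{X,\lambda} = \sigma_{Y,\lambda}^*$ on $X$. The eigenvalue $\lambda = 0$, excluded from the series, is handled separately: on a connected manifold $\sigma_{X,0} = 1/\vol(X)$ and $\sigma_{Y,0}^* = 1/\vol(Y)$, which coincide because $w_\varphi = 1$ forces $\vol(X) = \vol(Y)$.

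The hard part will be the Dirichlet coefficient extraction: one has to be careful that when $\lambda$ has multiplicity greater than one, the quantity that uniqueness attaches to $\lambda^{-s}$ is the \emph{entire sum} $\sum_{\Psi \vdash \lambda}\int a_0\,\Psi^2$, not its individual summands. This is precisely what the symbol $\sigma_\lambda$ packages together, and it explains why the pointwise identity one recovers is for the sum of squares $\sigma_{X,\lambda} = \sigma_{Y,\lambda}^*$ rather than for individual squared eigenfunctions, which would require further coordination of the two eigenbases. Beyond this point everything is formal integral manipulation, with no PDE input required.
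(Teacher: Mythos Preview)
Your proposal is correct and mirrors the paper's proof essentially step for step: specialise to $a_0=1$ to match spectra, invoke Lemma \ref{basechange} for $w_\varphi=1$, equate Dirichlet coefficients at each $\lambda$ via the identity theorem, change variables, apply the fundamental lemma of the calculus of variations, and treat $\lambda=0$ separately using connectedness and equality of volumes. Your remark about multiplicities packaging into $\sigma_\lambda$ is exactly the right bookkeeping point, and the converse direction by reversing steps is what the paper does as well.
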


\begin{proof}
The assumption is $$\zeta_{Y,a_0}(s)=\zeta_{X,a_0^*}(s)$$ for all $a_0 \in C^{\infty}(Y)$. 
Evaluated at the unit $a_0=1$, it follows that the nonzero spectra of $\Delta_X$ and $\Delta_Y$, including multiplicities, agree (using the identity principle for generalized Dirichlet series, cf. \cite{HardyDirichlet}, Thm.\ 6), and  this implies that both the volumes and dimensions of $X$ and $Y$ agree as well.

The coefficients of the above Dirichlet series (when grouped according to fixed $\lambda$) as in (\ref{zetaexpansion}) are uniquely determined by it, again by the identity theorem for Dirichlet series. If we spell out the assumption for the individual coefficients in $\lambda^s$ in the two Dirichlet series, we find  that  for any $a_0 \in C^{\infty}(Y)$ we have $$\int_{Y} \sum_{\Psi_Y \vdash \lambda} |\Psi_Y|^2 a_0
  d\mu_Y =  \int_{X} \sum_{\Psi_X \vdash \lambda} |\Psi_X|^2 a_0^*  d\mu_X $$
for $\lambda \neq 0$. We perform a coordinate change in the first integral by using the map $\varphi : X \rightarrow Y$. Since $w_\varphi=1$, we find 
\bea
\int_{X} \sum_{\Psi_Y \vdash \lambda} |\Psi^*_Y|^2 a_0^* d\mu_X & = & \int_{X} \sum_{\Psi_X \vdash \lambda} |\Psi_X|^2 a_0^*  d\mu_X
\eea
 for any $a_0 \in C^{\infty}(Y)$. Again, the fundamental lemma of the calculus of variations gives 
\bea \label{eqeig}
\sum_{\Psi_X \vdash \lambda} (\Psi_X)^2=  \sum_{\Psi_Y \vdash \lambda} (\Psi^*_Y)^2, 
\eea
for $\lambda \neq 0$. The eigenvalue $\lambda=0$ has multiplicity one, since the manifold is connected, and the normalized eigenfunction on $Y$ is equal to $1/\sqrt{\vol(Y)}$, which pulls back to $1/\sqrt{\vol(Y)}$. Since $X$ and $Y$ have the same volume (from equality of their zeta functions at $a_0=1$), we find this is equal to $1/\sqrt{\vol(X)}$, the normalized eigenfunction for $\lambda=0$ on $X$. 

The other direction of the equivalence is obtained by reversing steps. This finishes the proof. \end{proof}

We deduce a corollary about the diagonal of the heat kernel:

\begin{corollary}
If $\varphi : X \rightarrow Y$ is a smooth diffeomorphism of closed connected smooth Riemannian manifolds, then the following conditions are equivalent:
\begin{enumerate}
\item[\textup{(A)}] For all $a_0 \in C^{\infty}(Y)$, we have that $ \zeta_{Y,a_0} = \zeta_{X,\varphi^*(a_0)}$; 
\item[\textup{(B)}] $K_X(t,x,x) = K_Y(t,\varphi(x),\varphi(x))$ for all $t>0$ and all $x \in X$, and $w_\varphi=1$. 
\end{enumerate} 
\end{corollary}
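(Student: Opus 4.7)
The plan is to reduce this corollary to Proposition \ref{squares0}, which encodes condition (A) in precisely the data that the diagonal of the heat kernel exposes. The starting point is the eigenfunction expansion of the heat kernel, which on the diagonal reads
\[ K_X(t,x,x) = \sum_{\lambda \in \Lambda_X} e^{-\lambda t}\, \sigma_{X,\lambda}(x), \qquad K_Y(t,\varphi(x),\varphi(x)) = \sum_{\mu \in \Lambda_Y} e^{-\mu t}\, \sigma_{Y,\mu}^*(x); \]
both sides are generalized Dirichlet series in the variable $q = e^{-t}$ whose coefficient functions are the very $\sigma_\lambda$'s that appear in Proposition \ref{squares0}.

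For the direction (A) $\Rightarrow$ (B), I would simply invoke Proposition \ref{squares0}, which extracts from (A) the equality of spectra with multiplicities, the identity $w_\varphi = 1$, and the pointwise coincidences $\sigma_{X,\lambda} = \sigma_{Y,\lambda}^*$ for every $\lambda$. Substituting term by term into the two expansions above yields the diagonal equality in (B), while $w_\varphi = 1$ is directly part of the output of Proposition \ref{squares0}.

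For the converse (B) $\Rightarrow$ (A), I would first use the hypothesis $w_\varphi = 1$ to integrate the pointwise equality over $X$: the change of variables formula (\ref{chvar}) turns $\int_X K_Y(t,\varphi(x),\varphi(x))\, d\mu_X$ into $\int_Y K_Y(t,y,y)\, d\mu_Y$, so the heat traces $\sum_{\lambda \in \Lambda_X} e^{-\lambda t} = \sum_{\mu \in \Lambda_Y} e^{-\mu t}$ coincide, which by the identity theorem for generalized Dirichlet series (as invoked in Proposition \ref{squares0}) forces $\Lambda_X = \Lambda_Y$ with multiplicities. Applying the same identity theorem pointwise in $x$ to the original equality of diagonals then gives $\sigma_{X,\lambda}(x) = \sigma_{Y,\lambda}^*(x)$ for every $\lambda$ and every $x \in X$; these are exactly the hypotheses of Proposition \ref{squares0}, so (A) follows. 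The main obstacle is the standard passage from an equality of heat traces to an equality of spectra together with the matching of the ``diagonal projectors'' $\sigma_\lambda$; it is handled here by invoking the identity principle for generalized Dirichlet series twice, once in integrated form to match the spectra, and once pointwise to match the coefficient functions, with the role of $w_\varphi = 1$ being precisely to make the first of these steps work.
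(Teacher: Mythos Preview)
Your proof is correct and follows exactly the route the paper intends: the paper's own proof merely records the eigenfunction expansion of the diagonal heat kernel and says ``This implies the result,'' leaving the reduction to Proposition~\ref{squares0} and the twofold use of the identity principle for generalized Dirichlet series implicit. You have spelled out precisely those implicit steps, including the role of $w_\varphi=1$ in passing from the pointwise diagonal equality to the equality of heat traces.
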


\begin{proof}
Recall the following expression for the heat kernel (e.g., \cite{Berard}, V.3)
\bea \label{kernelxy}
K_X(t,x,y) = \mathop{\sum_{\lambda \in \Lambda}}_{\textrm{distinct}}  e^{-\lambda t} \sum_{\Psi \vdash \lambda} \Psi(x){\Psi(y)}, \, \, (t>0)
\eea
and setting $x=y$, we find 
\bea \label{kernel}
K_X(t,x,x) = \mathop{\sum_{\lambda \in \Lambda}}_{\textrm{distinct}}  e^{-\lambda t} \sigma_{X,\lambda}(x), \, \, (t>0).
\eea
This implies the result. \end{proof}

\section{Expansion coefficients of the two-variable zeta function} \label{arbspec}

We now take a closer look at the expansion coefficients of the two-variable zeta functions, under the assumptions of (i) in Theorem \ref{Z0thm}. This computation provides an alternative proof of Theorem \ref{Z0thm}, and will be used in proving part of Theorem \ref{extraprop}. 

We find that $X$ and $Y$ have the same spectra with multiplicities. As usual, we denote this spectrum by $\{ \lambda \}$ (with multiplicities). We have already seen that the polarisation identity for the quadratic form $g$ implies that $\zeta_{Y,a_1,a_2} = \zeta_{X,a_1^*,a_2^*}$. Our starting point is the expression for $\tr(a_1[\Delta,a_2]\Delta_Y^{-s})$ from Equation (\ref{abcd}). The coefficient in $\lambda^{-s}$ is 
$$ \int_Y a_1 \left[ \sigma_{Y,\lambda} \Delta_Y(a_2) - 2 g_Y (da_2,d\sigma_{Y,\lambda}) \right] d\mu_Y. $$
If we equate this to the corresponding coefficient of the other zeta function, and then perform a base change to $X$ (using $w_\varphi=1$) and use the fundamental lemma of calculus of variations to remove the integral over $X$, we find 
\bea  \sigma_\lambda (\Delta_Y^*-\Delta_X) &=& 2 g_Y^*(d-,d\sigma_{\lambda}) - 2 g_X(d-,d\sigma_{\lambda}) \label{used}  \\ &=& \mbox{ first order operator. } \nn  \eea
Equation (\ref{used}) means that the leading symbol of $\Delta_Y^*-\Delta_X$ vanishes outside the zero set of $\sigma_\lambda$,  which (by \cite{Watson}) implies that $g_Y^*=g_X$. Since for every $x$ there is a $\lambda$ with $\sigma_\lambda(x) \neq 0$, we find $g_Y^*=g_X$ everywhere. Hence $\varphi$ is an isometry. 

\section{Improvements in the case of simple spectrum}

In this section, we consider how to improve the theorem in case the spectrum of $\Delta_X$ is simple; we will prove Theorem \ref{extraprop}. We first start by listing some consequences of known results related to condition (a): 

\begin{remarks}

\mbox{}

- Condition (a) in Theorem \ref{Z0thm} does not {always} suffice to imply that $\varphi$ is an isometry, cf.\ Corollary \ref{exist}. 

- There exist isospectral, non-isometric compact Riemannian manifolds with simple spectrum (cf.\ Zelditch \cite{Zelditchmult}, Theorem C), so (for maps with unit jacobian) condition (a) is not equivalent to isospectrality (which would be condition (a) only for the identity function). 

- A result of Uhlenbeck (\cite{Uh1}) says that the condition of having non-simple Laplace spectrum is meager in the space of smooth Riemannian metrics on a given manifold $X$.  Thus, Theorem \ref{extraprop} treats the `generic' situation. But there do exist Riemannian manifolds for which the multiplicity of the spectrum grows polynomially in the eigenvalues, cf.\ e.g.\ Donnelly \cite{Don}. 
\end{remarks}

\begin{lemma} \label{squares}
Suppose that $X$ is a closed smooth Riemannian manifold. Then the zero set of any nonzero eigenfunction of $\Delta_X$ is not dense. If we let $\tilde{X} \subseteq X$ denote complement of the union of all such zero sets, then $\tilde{X}$ is dense in $X$, and the following holds: for any real $\Delta_X$-eigenfunction $\Phi$, and any function $h \in C(X)$ that satisfies $h^2=\Phi^2$, we have that $h=\pm \Phi$ on every connected component of $\tilde{X}$.   
\end{lemma}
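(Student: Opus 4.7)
The plan is to establish the three claims of the lemma in sequence.

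\emph{Nodal sets of nonzero eigenfunctions are nowhere dense.} By continuity alone, if $Z(\Phi)=\{\Phi=0\}$ were dense in $X$ for a continuous nonzero $\Phi$, then $\Phi\equiv 0$; this already gives that $Z(\Phi)$ is not dense. For the Baire-category argument below, I will need the stronger statement that $Z(\Phi)$ has \emph{empty interior}: this is the strong unique continuation principle for the Laplace operator (Aronszajn), saying that an eigenfunction vanishing on a nonempty open subset of a connected manifold vanishes identically. Cheng's structure theorem for nodal sets gives the more precise statement that $Z(\Phi)$ is a codimension-$1$ real analytic hypersurface away from a stratified singular set of codimension $\geq 2$.

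\emph{Density of $\tilde{X}$.} Exploiting the simple-spectrum context of this section, every nonzero eigenfunction of $\Delta_X$ is a scalar multiple of some element $\Psi_j$ of a fixed real orthonormal basis $\{\Psi_j\}_{j\in\N}$; in particular, the collection of nodal sets of nonzero eigenfunctions coincides with $\{Z(\Psi_j):j\in\N\}$, a countable family of closed sets with empty interior. Since $X$ is a compact, hence complete, metric space, the Baire category theorem gives that $\bigcup_j Z(\Psi_j)$ is meager, so $\tilde X=X\setminus\bigcup_j Z(\Psi_j)$ is a dense $G_\delta$ subset of $X$.

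\emph{Sign matching.} Let $\Phi$ be a real eigenfunction and $h\in C(X)$ with $h^2=\Phi^2$. On the open set $X\setminus Z(\Phi)$, the functions $\Phi$ and $h$ are continuous and nowhere zero; the continuous function $h/\Phi$ satisfies $(h/\Phi)^2\equiv 1$, and therefore equals $+1$ or $-1$ on each connected component of $X\setminus Z(\Phi)$. Because $\Phi$ is proportional to some $\Psi_j$, we have $\tilde X\subseteq X\setminus Z(\Phi)$, so every connected component of $\tilde X$ is contained in a single connected component of $X\setminus Z(\Phi)$, giving $h=\pm\Phi$ on that component.

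The main obstacle is the analytic input in the first step: the strong unique continuation principle for the Laplace operator is what upgrades the trivial ``non-density'' conclusion to ``empty interior,'' and without this Baire category does not apply to yield density of $\tilde X$. Once that ingredient is granted, the remaining two steps are soft point-set/topological arguments (a countable-union Baire step and a connectedness/sign argument).
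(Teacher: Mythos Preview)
Your proof is correct and takes the same approach as the paper: unique continuation to show nodal sets are nowhere dense, Baire category for the density of $\tilde X$, and continuity of $h/\Phi$ on $X\setminus Z(\Phi)$ for the sign-matching. You are more careful than the paper in explicitly invoking the simple-spectrum context to ensure there are only \emph{countably} many nodal sets (the paper writes only ``since the spectrum is discrete''); this is indeed the point that needs care, since without simplicity---or a restriction to a fixed eigenbasis---the union of nodal sets of \emph{all} nonzero eigenfunctions is all of $X$ as soon as some eigenspace has dimension $\geq 2$.
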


\begin{proof} 
We can write $c \Phi =h$ where $c$ is a function (a priori not necessarily globally constant) that takes values in $\{+1,-1\}$. We can assume that $X$ is connected. We choose for $\tilde{X}$ the complement of the union of all zero sets of non-zero $\Delta_X$-eigenfunctions on $X$. By continuity, $c$ is obviously constant on connected components of $\tilde{X}$. All we have to show is that $\tilde{X}$ is dense. 

We claim that the complement of the zero set of an eigenfunction $\Phi$ is an open dense subset of $X$. Granting this for the moment, since the spectrum is discrete, the intersection  $\tilde{X}$ of all such complements of zero sets is a countable intersection of open dense subsets of $X$. Since $X$ is compact and hence a complete metric space (for the Riemannian metric), the Baire category theorem implies that this intersection is itself dense. 

The claim will follow if we show that  the zero set $\cZ$ of $\Phi$ is nowhere dense. So suppose on the contrary that $\cZ$ is dense in a neighbourhood $U$ of some point $x \in X$. Then $\Phi \equiv 0$ on $\overline{U}$. Since the unique continuation theorem applies to the Laplacian with smooth coefficients (cf.\ \cite{AKS}, Rmk.\ 3, p.\ 449) we find $\Phi \equiv 0$ on all of $X$ (assumed connected), a contradiction. 
\end{proof}

\begin{proof}[Proof of Theorem \ref{extraprop}]

First, suppose $\varphi$ is an isometry between $X$ and $Y$. Pull-back by $\varphi$ induces a unitary transformation $U$ between $L^2(Y)$ and $L^2(X)$ that intertwines the respective Laplace operators.  From this intertwining, we find that for every $\lambda$, $U \Psi_{Y,\lambda}$ is a normalized eigenfunction of eigenvalue $\lambda$, hence equal to $\pm \Psi_{X,\lambda}$ by the simplicity assumption on the spectrum.  From (\ref{zetaexpansion}), we get that $\zeta_{Y,a_0}(s)=\zeta_{X,a_0^*}(s)$ for all functions $a_0 \in C^{\infty}(Y)$. 

For the more interesting converse direction, we know from the previous section that the pullback map $U=\varphi^*$ takes on the form
\beastar
U : L^ {2}(Y) & \to & L^{2}(X) \\
\Psi_{Y,\lambda} (\lambda \neq 0) & \mapsto & \varphi^{*}(\Psi_{Y,\lambda}) = c_\lambda \Psi_{X,\lambda}; \ \ c_\lambda \in \{ \pm 1\}\\
\Psi_{Y,0}=\frac{1_Y}{\sqrt{\vol(Y)}} & \mapsto & \varphi^*(\Psi_{Y,0}) = \frac{1_X}{\sqrt{\vol(Y)}}=\frac{1_X}{\sqrt{\vol(X)}}=\Psi_{X,0}
\eeastar
The map is clearly unitary and bijective. We prove that the map $U$ also intertwines the Laplace-Beltrami operators. For this, let $\tilde{X}_{\lambda}$ denote the zero set of the eigenfunction $\Psi_{X,\lambda}$. Let $x \in \tilde{X}_{\lambda}$. We can find an open neighbourhood $\mathcal{U}_x$ of $x$ on which $c_\lambda=\pm 1$ (defined by  $\varphi^{*}(\Psi_{Y,\lambda}) = c_\lambda \Psi_{X,\lambda}$ as above) is constant. For any $\tilde{x} \in \mathcal{U}_x$, we find 
$$ \Delta_X U \Psi_{Y,\lambda}(\tilde{x}) =  \Delta_X (c_\lambda \Psi_{X,\lambda})(\tilde{x}) = c_\lambda \lambda \Psi_{X,\lambda}(\tilde{x}) = U(\lambda \Psi_{Y,\lambda})(\tilde{x}) = U \Delta_Y \Psi_{Y,\lambda} (\tilde{x}).$$
This equality of continuous functions (for the continuity of the left hand side, use that the map $\varphi$ is assumed to be smooth)  holds on $\tilde{X}_{\lambda}$, and since $\tilde{X}_{\lambda}$ is dense in $X$ (see the previous proof), we find that it holds on $X$. Now since the eigenfunctions form a basis for $L^2(X)$, we find an equality of operators $$\Delta_X U = U \Delta_Y.$$

This implies that $X$ and $Y$ are isometric by the previous lemma, and finishes the proof of the second part of Theorem \ref{extraprop}. 

For the first part, we observe that the zero set of $\sigma_\lambda$ is nowhere dense (since $\sigma_\lambda$ is a finite linear combination of the positive functions $\Psi^2$ for $\Psi \vdash \lambda$, so $\sigma_\lambda=0$ implies $\Psi=0$ for all $\Psi \vdash \lambda$, and use Lemma \ref{squares}). Hence in this case, in the proof of Theorem \ref{Z0thm}, it suffices to have formula (\ref{used}) for \emph{only one $\lambda$}, i.e., equality of \emph{one} coefficient of the Dirichlet series in condition (b) suffices. 
\end{proof}

\section{Further improvements}

At the cose of using more ``hard'' analysis, we can improve some of the auxiliary results from the previous section even further. 

\begin{lemma}
If in Lemma \ref{squares} we assume that $h \in C^{\infty}(X)$, then $h=\pm \Phi$ with the sign constant everywhere.
\end{lemma}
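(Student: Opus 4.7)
The plan is to strengthen Lemma \ref{squares} from ``$c := h/\Phi$ is locally constant on the dense open set $\tilde X$'' to ``$c$ is globally constant'', exploiting the extra assumption that $h$ is smooth. Let $\cZ := \Phi^{-1}(0)$. On $X \setminus \cZ$, the function $c = h/\Phi$ is continuous with values in $\{+1,-1\}$, hence locally constant; the point is to show it does not jump across the nodal set of $\Phi$.

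First I would split $\cZ$ into its regular part $\cZ_{\mathrm{reg}} := \{x \in \cZ : \nabla \Phi(x) \neq 0\}$ and singular part $\cZ_{\mathrm{sing}} := \cZ \setminus \cZ_{\mathrm{reg}}$. Near any $x_0 \in \cZ_{\mathrm{reg}}$ the implicit function theorem makes $\cZ$ into a smooth embedded hypersurface, across which $\Phi$ changes sign transversally. Choosing a unit normal $\nu$ at $x_0$, we have $h = c_\pm \Phi$ on the two sides, with $c_\pm \in \{\pm 1\}$, so that the two one-sided normal derivatives of $h$ at $x_0$ equal $c_\pm \partial_\nu \Phi(x_0)$. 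Smoothness of $h$ forces them to coincide, and $\partial_\nu \Phi(x_0) \neq 0$ then yields $c_+ = c_-$. Hence $c$ extends continuously (and locally constantly) from $X \setminus \cZ$ to $X \setminus \cZ_{\mathrm{sing}}$.

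The hard step is to conclude that this locally constant $\{+1,-1\}$-valued function on $X \setminus \cZ_{\mathrm{sing}}$ is globally constant. For this I would use that $X \setminus \cZ_{\mathrm{sing}}$ is connected, which follows from the classical estimate that the singular nodal set of a Laplace eigenfunction on a smooth Riemannian manifold has Hausdorff codimension at least two in $X$ (see, e.g., Han--Lin, or the refinement by Caffarelli--Friedman of the Aronszajn--Krzywicki--Szarski unique continuation theorem invoked already in the proof of Lemma \ref{squares}); removing a set of codimension $\geq 2$ from a connected manifold preserves connectedness. In dimension one this analytic input is unnecessary, since on $S^1$ every nontrivial Laplace eigenfunction has only simple zeros, so $\cZ_{\mathrm{sing}} = \emptyset$ outright.

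Putting these steps together, $c$ is a well-defined constant in $\{+1,-1\}$ on $X \setminus \cZ_{\mathrm{sing}}$, and by continuity $h = c \Phi$ on all of $X$, as required. The main obstacle is thus the codimension-two estimate on $\cZ_{\mathrm{sing}}$; the derivative matching at regular nodal points and the final topological conclusion are both routine once that estimate is in hand.
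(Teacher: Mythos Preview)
Your proof is correct and takes a genuinely different route from the paper's. Rather than splitting $\cZ$ into its regular and singular parts, the paper argues directly at an arbitrary $x_0\in\cZ$ where $c$ changes sign: by matching one-sided derivatives of $h$ and $\Phi$ along smooth paths through $x_0$ meeting $\cZ$ only at $x_0$ (Caffarelli--Friedman is invoked solely to guarantee that $\cZ$ sits locally in finitely many hypersurfaces, so that such paths exist with tangent vectors spanning $T_{x_0}X$), together with the trivial vanishing along paths lying inside $\cZ$, one forces all first directional derivatives of $\Phi$ at $x_0$ to vanish; an induction then kills derivatives of every order. Hence $x_0$ would be a zero of infinite order of the eigenfunction $\Phi$, contradicting Aronszajn's strong unique continuation.

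Your argument trades this path-by-path derivative analysis and induction for a single sharper structural input---the codimension-two estimate on $\cZ_{\mathrm{sing}}$---after which the global constancy of $c$ reduces to the purely topological fact that $X\setminus\cZ_{\mathrm{sing}}$ is connected. The paper's approach demands less of nodal-set theory (only that $\cZ$ is locally contained in codimension-$1$ submanifolds, not the finer stratification of its singular locus), while yours isolates the hard analysis in one well-known estimate and makes the remaining steps at regular points and the final conclusion entirely elementary.
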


\begin{center}
\begin{figure}[h]
\includegraphics[width=8cm]{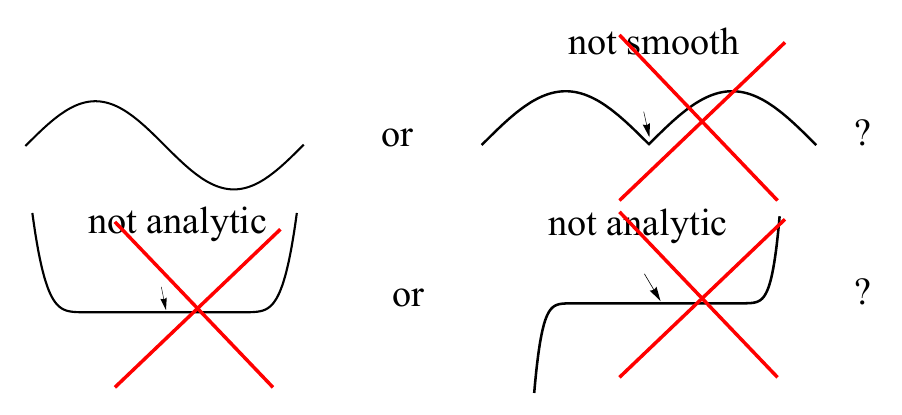}
\caption{An eigenfunction around a nodal set}
\label{fig0}
\end{figure}
\end{center}

\begin{remark} 

We can write $c \Phi =h$ where $c$ is a function (a priori not necessarily globally constant) that takes values in $\{+1,-1\}$. We have to prove that $c$ is globally constant.

The gist of the proof is to use the regularity of eigenfunctions at zeros. Think of the prototypical $\sin(x)=c(x)h(x)$ on $[0,2\pi]$. If $h(x)$ is \emph{not} equal to $\pm \sin(x)$, then we have $h(x)=|\sin(x)|$. But that function is not even $C^1$ at $x=\pi$. On the other hand, functions like
$ f(x)^2 =e^{-2/|x|} (x \neq 0); \ f(0)=0 $
have different smooth $f(x)$ as square root, but have a zero of infinite order. See Figure \ref{fig0}. 
\end{remark} 

\begin{proof}
It follows e.g.\ from the analysis in Caffarelli and Friedman (\cite{CF}, Example 3 pp.\ 432--433, compare: \cite{HL}, chapter 4, proof of Lemma 4.1.1) that for every point $x_0$ of the manifold $X$, there exists a small enough neighbourhood $U$ of $x_0$ that intersects the zero set $\cZ$ of $\Phi$ in the union of finitely many submanifolds of dimension $\leqslant m-1$. First note that if $x_{0} \notin \cZ$ there exists an open set $W \ni x_{0}$ for which $\Phi\mid_{W} \neq 0$. Then the function $$\left( \frac{h}{\Phi} \right) \mid_{W} = c\mid_{W}$$ is smooth and hence $c \mid_{W}$ must be constant.

\begin{center}
\begin{figure}[h]
\includegraphics[width=4cm]{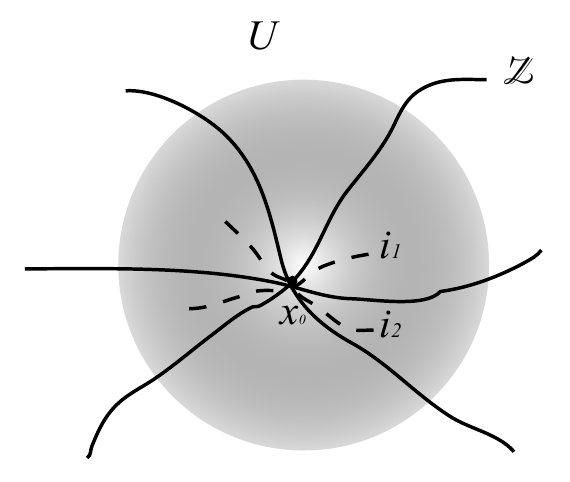}
\caption{Local structure of the zero set $\mathcal{Z}$ in a neighborhood $U$ of $x_0$, with independent paths $i_1,i_2$}
\label{fig01}
\end{figure}
\end{center}

Now, let $x_{0} \in \cZ$, then both $h$ and $\Phi$ vanish at $x_{0}$. Choose any $C^{\infty}$ path $i: ]-\varepsilon,\varepsilon[ \to X$ such that $$\im(i) \cap \cZ=\{x_0\} \mbox{ with }i(0)=x_0 \mbox{ and }||i'(t)||>0.$$ Then we get:
\bea
h(i (t)))=c( i (t))\cdot \Phi(i(t))
\eea
Assume that $c( i (t))$ changes sign at $0$.
Differentiating the equation at $t=0$ gives:
$$
\lim_{t \downarrow 0} h(i(t))'  =  \lim_{t \downarrow 0} \Phi(i(t))'   =  - \lim_{t \uparrow 0} \Phi(i(t))'. $$
Because of smoothness, this implies $$(\Phi \circ i)'(0) = (h\circ i)'(0)=0.$$
Any path in $\cZ$ is mapped identically to $0$ by both $h$ and $\Phi$ and hence also has derivative zero. It follows that both $h$ and $\Phi$ have all (directional) derivatives in $x_{0}$ equal to $0$. Indeed, because locally around $x_0$ the zero set $\cZ$ is contained in a finite union of codimension $\geqslant 1$ submanifolds, we can find $m$ paths $i$ as above whose tangent vectors at $x_0$ span $T_{x_0} X$, and since all directional derivatives along these vectors are zero, so is the total derivative. \\
By induction it follows that up to any order all derivatives vanish. Hence $x_{0}$ is a zero of the eigenfunction $\Phi$ of infinite order, which is impossible by Aronszajn's unique continuation theorem. We conclude that locally $c$ does not change sign at zeros (and anyhow not at nonzeros). We assume $X$ to be connected, so this implies that $c$ is constant.
\end{proof}

We deduce the following corollary:

\begin{corollary} \label{verysmooth}
If $\varphi : X \rightarrow Y$ is a $C^{\infty}$-diffeomorphism of closed connected $C^\infty$-Riemannian manifolds and with simple Laplace spectrum, such that the diagonals of the heat kernels match up in the sense that $K_X(t,x,x)=K_Y(t,\varphi(x),\varphi(x))$ for sufficiently small $t>0$, then the heat kernels match up in the sense that $K_X(t,x,y)=K_Y(t,\varphi(x),\varphi(y))$ for all $t>0$. 

In particular, if $g$ and $g'$ are two smooth Riemannian structures on a closed connected manifold and with simple Laplace spectrum, then $K_g(t,x,x)=K_{g'}(t,x,x)$ for sufficiently small $t>0$ implies that $K_g(t,x,y)=K_{g'}(t,x,y)$ for all $t>0$ and hence $g=g'$. 
\end{corollary}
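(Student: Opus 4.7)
The plan is to leverage simplicity of the spectrum to pin down each eigenfunction up to sign, and then reassemble the off-diagonal kernel out of these matched pieces. Spelling out the spectral expansion of the diagonal heat kernel under simplicity,
$$ K_X(t,x,x)=\sum_{\lambda\in\Lambda_X}e^{-\lambda t}\,\Psi_{X,\lambda}(x)^2, \qquad K_Y(t,\varphi(x),\varphi(x))=\sum_{\mu\in\Lambda_Y}e^{-\mu t}\,\Psi_{Y,\mu}(\varphi(x))^2, $$
both sides are real-analytic in $t>0$, so the hypothesized equality on a small interval $(0,\varepsilon)$ propagates to all $t>0$. Comparing coefficients of the resulting generalized Dirichlet series in $e^{-t}$ (by the uniqueness theorem already invoked in Proposition \ref{squares0}), any $\lambda\in\Lambda_X\setminus\Lambda_Y$ would force $\Psi_{X,\lambda}(x)^2=0$ for every $x\in X$, contradicting that $\Psi_{X,\lambda}$ is a non-zero eigenfunction. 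Hence $\Lambda_X=\Lambda_Y$ with matching simple multiplicities, and for each common eigenvalue $\lambda$ and every $x\in X$,
$$ \Psi_{X,\lambda}(x)^2 = \bigl(\Psi_{Y,\lambda}\circ\varphi\bigr)(x)^2. $$

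Next I apply the smooth square-root lemma just established: taking $\Phi:=\Psi_{X,\lambda}$, a real $\Delta_X$-eigenfunction, and $h:=\Psi_{Y,\lambda}\circ\varphi$, which is smooth on $X$ because $\varphi$ is a $C^\infty$-diffeomorphism, the identity $h^2=\Phi^2$ yields $h=\varepsilon_\lambda\Phi$ on all of $X$ for some globally constant sign $\varepsilon_\lambda\in\{\pm 1\}$. Substituting into the off-diagonal spectral expansion (\ref{kernelxy}) and using $\varepsilon_\lambda^2=1$,
$$ K_Y(t,\varphi(x),\varphi(y))=\sum_\lambda e^{-\lambda t}\Psi_{Y,\lambda}(\varphi(x))\Psi_{Y,\lambda}(\varphi(y))=\sum_\lambda e^{-\lambda t}\Psi_{X,\lambda}(x)\Psi_{X,\lambda}(y)=K_X(t,x,y) $$
for all $t>0$ and all $x,y\in X$, which is the first claim of the corollary.

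For the ``in particular'' statement, specializing $\varphi=\mathrm{id}_X$ gives $K_g(t,x,y)=K_{g'}(t,x,y)$ identically on $(0,\infty)\times X\times X$. To upgrade pointwise equality of heat kernels to $g=g'$, I invoke Varadhan's short-time asymptotic on a closed Riemannian manifold,
$$ \lim_{t\to 0^+}\bigl(-4t\log K_g(t,x,y)\bigr)=d_g(x,y)^2, $$
to conclude $d_g=d_{g'}$ on $X\times X$. The identity map is then a distance-preserving bijection of the underlying metric spaces, and the Myers--Steenrod theorem (or a direct infinitesimal recovery $g_x(v,v)=\lim_{\varepsilon\to 0}\varepsilon^{-2}d_g(x,\exp_x(\varepsilon v))^2$) promotes this to a smooth isometry, forcing $g=g'$.

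The only non-routine step is the appeal to the smooth square-root lemma, and its delicate content has already been dispatched in the preceding lemma; the remainder is bookkeeping with the spectral expansion, the uniqueness theorem for generalized Dirichlet series, and Varadhan's formula. What makes the argument go through this cleanly is the simplicity of the spectrum: each $\sigma_\lambda$ reduces to a single pure square $\Psi_{X,\lambda}^2$, so there is nothing to match beyond one global sign $\varepsilon_\lambda$ per eigenvalue. In the non-simple case one would instead have to identify eigenbases up to an orthogonal transformation inside each eigenspace, which the pointwise square-root identity cannot see, and this is precisely the obstacle that prevents the corollary from extending beyond the simple-spectrum setting.
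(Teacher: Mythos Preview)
Your proof is correct and follows essentially the same route as the paper: extract matching spectra and squared eigenfunctions from the diagonal heat-kernel expansion, invoke the smooth square-root lemma to obtain a global sign $\varepsilon_\lambda$ per eigenvalue, and reassemble the off-diagonal kernel via (\ref{kernelxy}). You additionally justify the final implication $K_g=K_{g'}\Rightarrow g=g'$ through Varadhan's asymptotic and Myers--Steenrod, a step the paper simply asserts.
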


\begin{proof} Since $$K_X(t,x,x)=K_Y(t,\varphi(x),\varphi(x))$$ for sufficiently small $t>0$, and we have simple Laplace spectrum, formula (\ref{kernel}) implies that the corresponding spectra, and the squares of eigenfunctions match up: $$(\Psi_{X,\lambda})^2=(\varphi^*\Psi_{Y,\lambda})^2.$$ The smoothness on both sides implies via the previous lemma that the functions agree up to a global sign. Applying (\ref{kernelxy}), we find
\begin{eqnarray*} K_X(t,x,y) &=& \sum_\lambda  e^{-\lambda t} \Psi_{X,\lambda}(x){\Psi_{X,\lambda}(y)} \\
&=&  \sum_\lambda  e^{-\lambda t} (\pm 1)^2\varphi^*\Psi_{Y,\lambda}(x){\varphi^*\Psi_{Y,\lambda}(y)} \\ &=& K_Y(t,\varphi(x),\varphi(y)). 
\end{eqnarray*}
The particular case follows by setting $\varphi$ to be the identity map. 
\end{proof}

\section{Example: flat tori} \label{tori}

\begin{se} Let $\T=\R^n/\Lambda$ denote a flat torus, corresponding to a lattice $\Lambda$ in $\R^n$. Let $\Lambda^\vee$ denote the dual lattice to $\Lambda$. The Laplacian is $$\Delta_\T= -\sum_k \partial^2_k,$$ the spectrum is $$\{ 4 \pi^2 ||\lambda^\vee||^2\}_{\lambda^\vee \in \Lambda^\vee},$$ a basis of orthogonal eigenfunctions of eigenvalue $\ell$  is given by $$\Psi_{\lambda^\vee}:=\frac{e^{2 \pi i\langle \lambda^\vee,x \rangle}}{\sqrt{\vol(T)}}$$ if $||\lambda^\vee||^2=\ell.$ (This is not a real basis as usual in this paper, but we will make appropriate adaptations.) The crucial property for us is that these functions satisfy $$|\Psi_{\lambda^\vee}|^2=\Psi_{\lambda^\vee} \cdot \overline{\Psi_{\lambda^\vee}}= \frac{1}{\vol(T)}.$$
\end{se}

\begin{se}
We consider the meaning of condition (a) in Theorem \ref{Z0thm} for the torus $\T$. Let $a_0 \in C^{\infty}(\T)$. Then 
\begin{eqnarray} \label{torus1}  \zeta_{\T,a_0}(s) &=& \sum_{\lambda^\vee \in \Lambda^\vee} \frac{1}{||4 \pi^2 \lambda^\vee||^{2s}} \cdot \frac{1}{\vol(T)} \int_\T a_0 |\Psi_{\lambda^\vee}|^2 \, d\mu_{\R^n} \\ &=& \left( \frac{1}{\vol(T)} \int_\T a_0 \, d\mu_{\R^n}  \right) \cdot \zeta_\T(s). \nn \end{eqnarray} We conclude from this by noting that the volume is determined by the spectrum:
\end{se}

\begin{proposition}
Let $\varphi \, : \, \T_1\rightarrow \T_2$ denote a smooth diffeomorphism between two flat tori. Then the following are equivalent: 
\begin{enumerate}
\item[\textup{(i)}] For all $a_0 \in C^{\infty}(T_{2})$, we have that $ \zeta_{\T_2,a_0} = \zeta_{\T_1,\varphi^*(a_0)}$; 
\item[\textup{(ii)}] $\T_1$ and $\T_2$ are isospectral, and $\varphi$  has jacobian $w_\varphi= 1$. \qed
\end{enumerate} 
  \end{proposition}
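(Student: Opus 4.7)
The key tool is already in hand: the factorization
\begin{equation*}
\zeta_{\T,a_0}(s) = \left(\frac{1}{\vol(\T)} \int_\T a_0 \, d\mu\right)\zeta_\T(s)
\end{equation*}
derived in formula (\ref{torus1}). Both directions are then essentially bookkeeping with this identity plus the change-of-variables formula (\ref{chvar}).

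The plan for (i) $\Rightarrow$ (ii): first set $a_0 = 1_{\T_2}$. Then the identity $\zeta_{\T_2,1}=\zeta_{\T_1,1}$ together with the factorization gives $\zeta_{\T_2}(s)=\zeta_{\T_1}(s)$, i.e.\ isospectrality (which in particular forces $\vol(\T_1)=\vol(\T_2)$ by looking at the leading residue, as in Lemma \ref{residue}). Plugging this back into the factorization for arbitrary $a_0$, and canceling the common nonzero factor $\zeta_{\T_i}(s)$, yields
\begin{equation*}
\int_{\T_2} a_0 \, d\mu_{\T_2} = \int_{\T_1} \varphi^*(a_0)\, d\mu_{\T_1} \qquad \text{for every } a_0 \in C^\infty(\T_2).
\end{equation*}
Comparing this with the change-of-variables formula $\int_{\T_2} a_0\, d\mu_{\T_2} = \int_{\T_1} \varphi^*(a_0)\, w_\varphi\, d\mu_{\T_1}$ gives $\int_{\T_1} \varphi^*(a_0)(1-w_\varphi)\, d\mu_{\T_1}=0$ for all $a_0 \in C^\infty(\T_2)$; since $\varphi$ is a diffeomorphism the functions $\varphi^*(a_0)$ exhaust $C^\infty(\T_1)$, and the fundamental lemma of the calculus of variations concludes $w_\varphi \equiv 1$.

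The plan for (ii) $\Rightarrow$ (i) simply reverses these steps: isospectrality gives $\zeta_{\T_1}=\zeta_{\T_2}$ and $\vol(\T_1)=\vol(\T_2)$, and $w_\varphi=1$ combined with (\ref{chvar}) gives $\int_{\T_2} a_0\, d\mu_{\T_2} = \int_{\T_1} \varphi^*(a_0)\, d\mu_{\T_1}$, so the factorization identifies $\zeta_{\T_2,a_0}$ with $\zeta_{\T_1,\varphi^*(a_0)}$. There is no real obstacle here—the only thing to be careful about is making sure that the common factor $\zeta_{\T_i}(s)$ is genuinely nonvanishing on some open region of $s$ (which it is on the half-plane of absolute convergence) so that one may cancel it before invoking the identity theorem for generalized Dirichlet series.
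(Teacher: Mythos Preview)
Your proposal is correct and follows exactly the argument the paper intends: the proposition is stated with a \qed and no written proof because it is meant to be an immediate consequence of the factorization (\ref{torus1}) together with the change-of-variables argument already used in Lemma \ref{basechange}. You have simply spelled out those details, and done so accurately.
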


\begin{corollary} \label{exist} There exist non-isometric manifolds for which condition (a) of Theorem \ref{Z0thm} holds.
\end{corollary}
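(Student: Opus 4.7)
The plan is to derive the corollary directly from the proposition immediately preceding it, by exhibiting a diffeomorphism between two isospectral but non-isometric flat tori whose jacobian equals $1$. Since that proposition characterizes condition (a) for flat tori as the conjunction ``isospectral plus $w_\varphi=1$'', it is enough to produce such a pair and such a map.

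First I would invoke Milnor's classical pair of non-isometric $16$-dimensional flat tori built from the two distinct even unimodular rank-$16$ lattices $E_8\oplus E_8$ and $D_{16}^{+}$; these are known to be isospectral but not isometric. Any two full-rank lattices $\Lambda_1,\Lambda_2 \subset \R^{16}$ are isomorphic as $\Z$-modules, so choosing $\Z$-bases of each and letting $A\in\mathrm{GL}_{16}(\R)$ send one basis to the other produces a linear map with $A(\Lambda_1)=\Lambda_2$. This map descends to an affine, hence smooth, diffeomorphism $\varphi:\T_1\to\T_2$.

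Next I would verify that $w_\varphi=1$. Applying Lemma \ref{residue} with $a_0=1$ shows that isospectral manifolds have equal volumes, so $\vol(\T_1)=\vol(\T_2)$, i.e.\ the two lattices have equal covolume. Since $|\det A|$ is exactly $\vol(\R^{16}/\Lambda_2)/\vol(\R^{16}/\Lambda_1)$, we conclude $|\det A|=1$, and hence $w_\varphi=1$ (for an affine descended map the jacobian is constant and equal to $|\det A|$). The preceding proposition then gives $\zeta_{\T_2,a_0}=\zeta_{\T_1,\varphi^*(a_0)}$ for all $a_0\in C^\infty(\T_2)$, while by construction $\varphi$ is not an isometry, so condition (a) of Theorem \ref{Z0thm} is satisfied by a non-isometric pair.

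There is essentially no technical obstacle: the only non-formal input is the existence of a Milnor-style isospectral non-isometric pair, and once such a pair is in hand the bases-to-bases linear map immediately delivers the unit-jacobian diffeomorphism. If one wished to avoid Milnor's specific example, any of the later constructions of isospectral non-isometric flat tori (e.g.\ the four-dimensional examples of Conway--Sloane) would plug into the same argument verbatim.
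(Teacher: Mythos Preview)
Your proof is correct and follows essentially the same approach as the paper: exhibit a pair of isospectral non-isometric flat tori and a linear diffeomorphism between them with unit jacobian, then invoke the preceding proposition. The only difference is cosmetic---the paper uses the explicit four-dimensional Conway--Sloane/Schiemann pair and computes $\det(G_+G_-^{-1})=1$ directly, whereas you use Milnor's sixteen-dimensional pair and the covolume argument to get $|\det A|=1$; both yield the same conclusion by the same mechanism.
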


\begin{proof} 
Take the following isospectral, non-isometric tori $\T_{\pm}$ (\cite{Schiemann}, \cite{CS}) in dimension 4, spanned by the column vectors in the respective matrices  
$G_+$ and $G_-$
$$  G_{\pm}=\frac{1}{2\sqrt{3}} \left(
\begin{array}{rrrr}
 \pm 3 & -\sqrt{7} & -\sqrt{13} & -\sqrt{19} \\
 1 & \pm 3 \sqrt{7} & \sqrt{13} & -\sqrt{19} \\
 1 & -\sqrt{7} & \pm 3 \sqrt{13} & \sqrt{19} \\
 1 & \sqrt{7} & \sqrt{13} & \pm 3 \sqrt{19}
\end{array}
\right). $$
Consider the linear map $ A \, : \, \R^4 \rightarrow \R^4$ given by 
$$ A= G_+ G_-^{-1} = \frac{1}{5} \left(
\begin{array}{rrrr}
 -3 & -2 & -1 & -3 \\
 2 & -2 & 4 & -3 \\
 3 & -3 & -4 & 3 \\
 1 & 4 & 2 & -4
\end{array}
\right) $$ 
with determinant $\det(A)=1$. This map factors through to a map $\T_- \rightarrow \T_+$ with determinant ($=$ jacobian) $1$.  \end{proof}

\begin{se}
We now consider condition (b) in Theorem \ref{Z0thm} for the torus $\T$. We compute for $a_1, a_2 \in C^{\infty}(\T)$, using Lemma \ref{2to1}, that \begin{equation} \label{torus2} \zeta_{\T,a_1,a_2}(s)= \left( \frac{1}{\vol(T)} \int_\T \nabla(a_1)^\top \nabla(a_2) \, d\mu_{\R^n}  \right) \zeta_\T(s).\end{equation} 
 \end{se}

\part{LENGTHS AND DISTANCES}

\section{Length categories} \label{lengthcat}

\begin{definition}
We call a pair $(\cC,\ell)$ a \emph{length category} if $\cC$ is a category endowed with a subcategory $\dD$, full on objects, such that every morphism in $\dD$ is an isomorphism in $\cC$ and $\dD$. These are called $\dD$-isomorphisms from now on. Furthermore, for every $X,Y \in \mathrm{Ob}(\cC)$ and every $\varphi \in \Hom(X,Y)$, there is defined a positive real number $\ell(\varphi) \in \R_{\geqslant  0}$, called the \emph{length of $\varphi$} such that 
\begin{enumerate}
\item[\textup{\textbf{(L1)}}] $\ell(\varphi)=0$ if and only if $\varphi$ is an $\dD$-isomorphism; 
\item[\textup{\textbf{(L2)}}] If $X,Y,Z \in \mathrm{Ob}(\cC)$ and $\varphi \in \Hom(X,Y), \psi \in \Hom(Y,Z)$, then 
$$ \ell(\psi \circ \varphi) \leqslant  \ell(\varphi) + \ell(\psi).$$
\end{enumerate}
\end{definition}

\begin{remark}
In particular, in $\textup{\textbf{(L1)}}$ we do not assume that the $\dD$-isomorphism classes are necessarily the categorical isomorphism classes (i.e, the maps for which there exists an inverse in the category $\cC$), but we do assume that the $\dD$-isomorphisms are (some of the) categorical isomorphisms of $\cC$. For instance, think of the category $\cC$ of metric spaces and continuous maps, but with $\dD$-isomorphisms the isometries (instead of the homeomorphisms). Note also that the morphisms of $\dD$ can be recovered from the pair $(\cC,\ell)$ as those morphisms in $\cC$ with length zero.
\end{remark}

To illustrate the concept, let us look at some examples. If there is no subcategory $\dD$ specified then implicitly it is understood that the $\dD$-isomorphisms are the $\cC$-isomorphisms.

\begin{examples}

\mbox{ }

- Any category is a length category in a trivial way, defining the ``discrete'' length by $$\ell(X \cong Y)=0 \mbox{ and }\ell(\varphi)=1 \mbox{ otherwise.} $$ However, categories can carry other, more meaningful lengths. 

- Let $\mathsf{Grp}$ denote the category of finite commutative groups, and for $\varphi \in \Hom(G,H)$ a homomorphism of groups $G$ and $H$, define its length as
$$\ell(\varphi) = \max \{ \log(|\ker(\varphi)|), \log(|\coker(\varphi)|) \}. $$ 
This obviously satisfies \textbf{(L1)} and also \textbf{(L2)} since $|\ker(\psi \circ \varphi)| \leq |\ker(\varphi)| \cdot |\ker(\psi)|$ and similarly for the cokernel. Hence $(\mathsf{Grp},\ell)$ is a length category. 

- More generally, an \emph{abelian} category with in some sense ``measurable'' kernels and cokernels is a length category by a similar construction. However, non-abelian categories can also be length categories for an interesting length function. In some sense, this is a metric substitute for the non-existence of kernels/cokernels. 

- The category of compact metric spaces with bi-Lipschitz homeomorphisms is a length category for the length 
$$ \ell(\varphi) := \max\{ \left|\log \dil(\varphi)\right|, \left|\log \dil(\varphi^{-1})\right| \}, $$
where $\dil(\varphi)$ is the dilatation of the map $\varphi$. This length induces Lipschitz distance between compact metric spaces.  \end{examples}

Lengths in categories sometimes give rise to a metric on the moduli space of objects of the category $\cC$ up to $\dD$-isomorphism, as the following lemma shows (note that the condition is sufficient, but not necessary):

\begin{lemma} \label{ddd}
If $(\cC,\ell)$ is a length category and we put $$d(X,Y)=\frac{1}{2} \left(\inf_{\varphi \in \Hom_{\cC}(X,Y)} \{ \ell(\varphi),+\infty\} \, + \, \inf_{\psi \in \Hom_{\cC}(Y,X)} \{\ell(\psi),+\infty\} \right) $$
then $d$ is an extended (i.e., $(\R \cup\{ +\infty \})$-valued) metric on the ``moduli space'' $\mathrm{Ob}(\cC)/\dD \mathrm{-iso}$ if for $d(X,Y)=0$, the infimum in the definition of $d$ is attained in $\Hom(X,Y)$. If $\Hom(X,Y) \neq \emptyset$ for any $X,Y \in \mathrm{Ob}(\cC)$, $d$ is a (finite) metric. 
\end{lemma}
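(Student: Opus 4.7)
The plan is to verify the five properties required for $d$ to be an extended metric on $\mathrm{Ob}(\cC)/\dD\mathrm{-iso}$: well-definedness on $\dD$-isomorphism classes, symmetry, non-negativity, the triangle inequality, and the separation axiom $d(X,Y)=0 \Leftrightarrow [X]=[Y]$.

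Non-negativity is immediate since $\ell$ takes values in $\R_{\geqslant 0}$, and symmetry is built into the symmetric form of the definition. For well-definedness on the moduli space, if $\alpha : X \to X'$ and $\beta : Y \to Y'$ are $\dD$-isomorphisms, then by \textbf{(L1)} both $\alpha^{\pm 1}$ and $\beta^{\pm 1}$ have length zero; then \textbf{(L2)} applied to $\beta \circ \varphi \circ \alpha^{-1}$ (and to its reverse construction using $\beta^{-1}$ and $\alpha$) shows that conjugation is length-preserving, so it induces a length-preserving bijection $\Hom_\cC(X,Y) \to \Hom_\cC(X',Y')$. Hence the two infima defining $d(X,Y)$ agree with those defining $d(X',Y')$.

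The triangle inequality follows directly from \textbf{(L2)}: for any $\varphi \in \Hom(X,Y)$ and $\psi \in \Hom(Y,Z)$ we have $\ell(\psi \circ \varphi) \leqslant \ell(\varphi) + \ell(\psi)$, so taking infima over $\varphi$ and $\psi$ separately gives
\[ \inf_{\Hom(X,Z)} \ell \;\leqslant\; \inf_{\Hom(X,Y)} \ell \,+\, \inf_{\Hom(Y,Z)} \ell, \]
with the convention that an infimum over an empty hom-set equals $+\infty$. The analogous inequality in the reverse direction $Z \leadsto X$, added to the above and halved, yields $d(X,Z) \leqslant d(X,Y) + d(Y,Z)$.

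The step that genuinely uses the standing hypothesis, and the only one I expect to require care, is the separation axiom. The forward implication $[X]=[Y] \Rightarrow d(X,Y)=0$ is clear from \textbf{(L1)}, which supplies a $\dD$-isomorphism and its inverse, both of length zero. Conversely, if $d(X,Y)=0$ then both infima vanish; the assumption that they are attained provides actual morphisms $\varphi \in \Hom(X,Y)$ and $\psi \in \Hom(Y,X)$ with $\ell(\varphi) = \ell(\psi) = 0$, and \textbf{(L1)} then identifies each of these as a $\dD$-isomorphism, so $[X]=[Y]$. Without the attainment hypothesis, one would only obtain a sequence of morphisms of length tending to zero, and the category offers no abstract mechanism to pass to a limit. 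Finally, if $\Hom_\cC(X,Y)$ is nonempty for every pair, both infima are taken over nonempty subsets of $\R_{\geqslant 0}$, hence are finite, and $d$ is a genuine (finite) metric.
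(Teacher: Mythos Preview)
Your proof is correct and follows essentially the same route as the paper: well-definedness via \textbf{(L1)}/\textbf{(L2)}, triangle inequality via the hemimetric $d'(X,Y)=\inf_{\varphi}\ell(\varphi)$ and \textbf{(L2)}, and the separation axiom via the attainment hypothesis plus \textbf{(L1)}. The only cosmetic difference is that the paper spells out the triangle inequality with an explicit $\varepsilon$-argument whereas you take infima directly; your version is slightly cleaner but logically equivalent.
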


\begin{proof}
First of all, length is well-defined on objects up to isomorphism: if $\varphi$ is arbitrary and $\psi$ is an isomorphism, then $$\ell(\varphi \circ \psi) \mathop{\leqslant }_{\mbox{\footnotesize{\textbf{(L2)}}}} \ell(\varphi) + \ell(\psi)  \mathop{=}_{\mbox{\footnotesize{\textbf{(L1)}}}} \ell(\varphi) =   \ell(\varphi \circ \psi \circ \psi^{-1})  \mathop{\leqslant }_{\mbox{\footnotesize{\textbf{(L2)}}}}\ell(\varphi \circ \psi) + \ell(\psi^{-1})  \mathop{=}_{\mbox{\footnotesize{\textbf{(L1)}}}} \ell(\varphi \circ \psi). $$
The positivity of $d$ is clear. For the triangle inequality, since $d$ is defined as the symmetrization of the hemimetric $$d'(X,Y)=\inf_{\varphi \in \Hom(X,Y)} \{\ell(\varphi),+\infty\}, $$ it suffices to prove the triangle inequality for $d'$. Let $\varepsilon>0$. Let 
$\varphi \in \Hom(X,Y)$ and $\psi \in \Hom(Y,Z)$ be such that $$\len(\varphi) \leqslant  d'(X,Y)+\varepsilon/2\mbox { and } \len(\psi) \leqslant  d'(Y,Z)+\varepsilon/2$$ (which is possible by the definition of length as an infimum). 
We have 
$$ d'(X,Z) = \inf_{\theta \in \Hom(X ,Z)} \len(\theta) \leqslant  \len(\psi \circ \varphi). $$ By axiom \textbf{(L2)} we find $$ \len(\psi \circ \varphi) 
 \leqslant     \len(\psi) + \len(\varphi)  \leqslant   d'(Y,Z)+d'(X,Y) + \varepsilon. $$
The triangle inequality follows by letting $\varepsilon$ tend to zero. 
Finally, assume $d(X,Y)=0$. Since the infimum in the definition is attained, we find a map $\varphi \in \Hom(X,Y)$ of length zero. Then axiom \textbf{(L1)} implies that $X\cong Y$. 
\end{proof}

\section{The length of a map between Riemannian manifolds} \label{length}

We will now consider the category $\cR$ of closed smooth Riemannian manifolds, with homomorphisms smooth diffeomorphisms and $\dD$-isomorphisms the isometries. We define a length function in this category using our diffeomorphism invariant. The idea is to measure how far the one- and two-variable zeta functions $\zeta_{Y,a_0}$ and $\tzeta_{Y,a_1}$  are apart under pullback by the map $\varphi$  in some suitable distance on the set of meromorphic functions, and to test this over certain well-behaved sets of test-functions $a_0,a_1$. 

\begin{definition}  
Let $f$ and $g$ denote two functions that are holomorphic non-zero in a right half line $$H_{\sigma}:=\{ s \in \R \mid s \geqslant \sigma \},$$ where $\sigma$ is fixed once and for all.  Define 
$$ \delta_1(f,g):= \mathop{\sup_{\sigma \leqslant s\leqslant \sigma+1}} \{ | \log \left| \frac{f(s)}{g(s)} \right| | \}, $$ and set $$d_\sigma(f,g):=  \frac{\delta_1(f,g)}{1+\delta_1(f,g)}.$$ 
\end{definition}

Convergence in $d_\sigma$ is \emph{not} uniform convergence of general analytic functions without zeros on $H_\sigma$ (because the absolute value signs cause an indeterminacy up to an analytic function with values in the unit circle), but when specialized to our Dirichlet series, this problem disappears, cf.\ infra. 

\begin{definition}
The \emph{length of a smooth diffeomorphism $\varphi\, : \, X \rightarrow Y$} of Riemannian manifolds of dimension $N$ is defined by 
$$ \ell(\varphi):= \mathop{\sup_{a_0 \in C^{\infty}(Y,\R_{\geqslant 0})-\{0\}}}_{a_1 \in C^{\infty}(Y)-\R} \! \! \! \max \, \{ d_{N}(\zeta_{X,a^*_0},\zeta_{Y,a_0}), d_{N}(\tzeta_{X,a^*_1}, \tzeta_{Y,a_1}) \}.$$
\end{definition}

\begin{remarks}

\mbox{ } 

- Since $d_\sigma$ is obviously bounded by $1$, the length of a map also takes values in $[0,1]$. 

- There is some arbitrariness in the definition of $\ell(\varphi)$: our zeta functions are holomorphic in $\Re(s) > \frac{N}{2}$, so one might also take the product metric of the suprema over an other suitable subset.
\end{remarks}

The main theorem can be rephrased as follows, which shows that $(\cR,\ell)$ satisfies axiom \textbf{(L1)} of a length category:

\begin{proposition}
If $X$ and $Y$ are closed Riemannian manifolds, then a smooth diffeomorphism $\varphi \, : \, X \rightarrow Y$ has length zero if and only if it is an isometry.
\end{proposition}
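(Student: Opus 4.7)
My plan is to derive both directions from Theorem \ref{Z0thm} together with a positivity-plus-analytic-continuation argument that handles the somewhat weak-looking definition of $\ell$. The easy direction ``$\varphi$ is an isometry $\Rightarrow \ell(\varphi)=0$'' follows at once from the implication \textup{(ii)} $\Rightarrow$ \textup{(i)} of Theorem \ref{Z0thm}: in that case every pulled-back zeta function agrees identically with its unpulled counterpart, so each ratio appearing in the definition of $\delta_1$ is identically $1$, $\delta_1=0$, and hence the supremum defining $\ell(\varphi)$ vanishes.

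For the converse, I will show that $\ell(\varphi) = 0$ forces both conditions \textup{(a)} and \textup{(b)} of Theorem \ref{Z0thm}; the theorem then gives that $\varphi$ is an isometry. Unpacking the definition, $\ell(\varphi)=0$ means $d_N=0$, equivalently $\delta_1=0$, equivalently
\[
|\zeta_{X,a_0^*}(s)| = |\zeta_{Y,a_0}(s)| \quad \text{and} \quad |\tzeta_{X,a_1^*}(s)| = |\tzeta_{Y,a_1}(s)| \qquad (s \in [N,N+1])
\]
for every $a_0 \in C^{\infty}(Y,\R_{\geqslant 0}) \setminus \{0\}$ and every non-constant $a_1 \in C^{\infty}(Y)$. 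The core move is to upgrade these equalities of absolute values on a compact real interval to identities of meromorphic functions on all of $\C$.

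This upgrade is powered by positivity. For $a_0 \geqslant 0$, the expansion (\ref{zetaexpansion}) presents $\zeta_{Y,a_0}$ as a generalized Dirichlet series with non-negative coefficients $\int_Y \sigma_{Y,\lambda} a_0 \, d\mu_Y$; these cannot all vanish, for otherwise every basis eigenfunction $\Psi \vdash \lambda > 0$ would vanish on the open set $\{a_0>0\}$, contradicting unique continuation (Lemma \ref{squares}). Since $[N,N+1]\subset(N/2,\infty)$ lies in the half-plane of absolute convergence, $\zeta_{Y,a_0}(s)>0$ on this interval; the same holds for $\zeta_{X,a_0^*}$ as $a_0^* \geqslant 0$ is non-zero. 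Equality of absolute values therefore becomes equality of functions on $[N,N+1]$, and by meromorphic continuation (Lemma \ref{higs}) the identity $\zeta_{X,a_0^*} = \zeta_{Y,a_0}$ extends to all of $\C$. The two-variable case is handled identically via the rewriting $\tzeta_{Y,a_1} = \zeta_{Y,g_Y(da_1,da_1)}$ of Lemma \ref{2to1}, noting that $g_Y(da_1,da_1) \geqslant 0$ is non-zero precisely when $a_1$ is non-constant.

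To finish, I remove the sign restriction using the $\R$-linearity of $a \mapsto \zeta_{Y,a}$ together with the fact that constants pull back to themselves: any real $a_0$ decomposes as $(a_0+M)-M$ with $M > \|a_0\|_\infty$, whence condition \textup{(a)} of Theorem \ref{Z0thm} extends to all $a_0 \in C^{\infty}(Y)$; condition \textup{(b)} is trivially vacuous for constant $a_1$, so it likewise extends to all $a_1$. Theorem \ref{Z0thm} then delivers that $\varphi$ is an isometry. The main obstacle is really the positivity-and-analytic-continuation step, which converts a crude ``bounded ratio on an interval'' condition into the rigid identity needed to apply Theorem \ref{Z0thm}; the linearity extension and the easy direction are formal.
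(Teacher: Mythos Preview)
Your proof is correct and follows essentially the same approach as the paper's: both use positivity of the Dirichlet coefficients to convert equality of absolute values on the real interval into genuine equality, extend this to an identity of zeta functions, and then invoke Theorem~\ref{Z0thm} after removing the positivity restriction on $a_0$ by linearity. You are in fact slightly more careful than the paper in verifying that the coefficients do not all vanish (via unique continuation) and in spelling out the linearity extension, and you substitute the identity theorem for meromorphic functions where the paper cites the uniqueness theorem for Dirichlet series---but these are cosmetic differences, not a different route.
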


\begin{proof}
If $\varphi$ has length zero, then we have an equality of absolute values of zeta functions under pullback, at positive functions $a_0 \in C^{\infty}(Y,\R_{\geqslant 0})$ and functions $a_{1} \in C^{\infty}(Y)$. 

Since all eigenvalues are positive, and all Dirichlet series coefficients of the zeta functions are positive when evaluated at a positive function $a_0$ (cf.\ section \ref{sq}), the values for $s \in H_{d+1}$ of such zeta functions are positive, and hence equal. Now a standard theorem (e.g., \cite{Serre}, Section 2.2) implies that the two Dirichlet series are everywhere equal. We conclude that $\zeta_{X,a^*_0}=\zeta_{Y,a_0}$ for $a_0 \in C^{\infty}(Y,\R_{\geqslant 0})$ and $\tzeta_{X,a^*_1}(s) = \tzeta_{Y,a_1}$ for all $a_1 \in C^{\infty}(Y)$. Since any $a_0 \in C^{\infty}(Y)$ is a linear combination of such positive functions, we can apply Theorem 
\ref{Z0thm} to conclude that $\varphi$ is an isometry. The converse statement follows directly from the same theorem. 
\end{proof}

We now prove that $(\cR,\ell)$ also satisfies axiom \textbf{(L2)} of a length category: 

\begin{proposition}
If $X,Y,Z$ are closed Riemannian manifolds, and $\varphi \, : \, X \rightarrow Y$ and $\psi \, : \, Y \rightarrow Z$ are two smooth diffeomorphisms, then 
$$ \ell(\psi \circ \varphi) \leqslant  \ell(\varphi) + \ell(\psi). $$
\end{proposition}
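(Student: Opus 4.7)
The plan is to reduce the composition inequality to the triangle inequality for $d_N$ applied to Dirichlet series, using an intermediate $Y$-zeta function as midpoint. Contravariance of pullback gives $(\psi\circ\varphi)^* = \varphi^*\circ\psi^*$, so for any $b \in C^\infty(Z)$ the identity $\zeta_{X,(\psi\circ\varphi)^*b} = \zeta_{X,\varphi^*(\psi^*b)}$ holds, and likewise for $\tzeta$. The natural midpoint between the $X$- and $Z$-zeta functions is thus the $Y$-zeta function with test function $\psi^*b$.

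First I would verify that $\delta_1$ is subadditive on triples of functions that are holomorphic and nowhere vanishing on $H_N$: the identity $\log|f/h|=\log|f/g|+\log|g/h|$, combined with the triangle inequality for absolute values and the $\sup$ over $s\in[N,N+1]$, yields $\delta_1(f,h)\leqslant \delta_1(f,g)+\delta_1(g,h)$. The passage to $d_N=\delta_1/(1+\delta_1)$ rests on the subadditivity of $t\mapsto t/(1+t)$ on $\R_{\geqslant 0}$ (this function is concave with value $0$ at $0$, hence $\frac{a+b}{1+a+b}\leqslant \frac{a}{1+a}+\frac{b}{1+b}$ for $a,b\geqslant 0$) together with its monotonicity; combining these, $d_N$ is subadditive on admissible triples.

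Given subadditivity, for each $b_0\in C^\infty(Z,\R_{\geqslant 0})-\{0\}$ the pullback $a_0:=\psi^*b_0$ lies in $C^\infty(Y,\R_{\geqslant 0})-\{0\}$ (pullback by a diffeomorphism preserves positivity and non-triviality), so
$$d_N(\zeta_{X,(\psi\circ\varphi)^*b_0},\zeta_{Z,b_0})\leqslant d_N(\zeta_{X,\varphi^*a_0},\zeta_{Y,a_0})+d_N(\zeta_{Y,\psi^*b_0},\zeta_{Z,b_0})\leqslant \ell(\varphi)+\ell(\psi),$$
and the analogous inequality for $\tzeta$ with $b_1\in C^\infty(Z)-\R$ holds because $\psi^*$ also preserves non-constancy. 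Taking the supremum of the outer max over such $b_0,b_1$ yields $\ell(\psi\circ\varphi)\leqslant \ell(\varphi)+\ell(\psi)$.

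The one bookkeeping preliminary is that all zeta functions involved actually belong to the domain of $d_N$, that is, are holomorphic and nowhere vanishing on the half-line $H_N$. Holomorphy there is Lemma \ref{higs}, since the possible poles at $\tfrac12(N-\Z_{\geqslant 0})$ all lie strictly below $N$. Non-vanishing on $H_N$ follows because, for $a_0\geqslant 0$ nonzero, the Dirichlet coefficients $\int_X \sigma_{X,\lambda}\,a_0\,d\mu_X$ in (\ref{zetaexpansion}) are nonnegative with at least one strictly positive, so the series is strictly positive throughout its region of absolute convergence $\Re(s)>N/2$; the same applies to $\tzeta_{Y,a_1}$ via Lemma \ref{2to1} together with the fact that $g_Y(da_1,da_1)$ is nonnegative and nonzero whenever $a_1$ is nonconstant on a connected manifold. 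I do not anticipate a real obstacle; the argument is merely a careful unwinding of the triangle inequality through the definition of $d_N$.
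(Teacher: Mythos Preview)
Your proof is correct and follows essentially the same approach as the paper's: both insert the $Y$-zeta function $\zeta_{Y,\psi^*b}$ as a midpoint via the multiplicative identity $\zeta_{Z,b}/\zeta_{X,\varphi^*\psi^*b} = (\zeta_{Z,b}/\zeta_{Y,\psi^*b})\cdot(\zeta_{Y,\psi^*b}/\zeta_{X,\varphi^*\psi^*b})$, together with the observation that $\psi^*$ carries the relevant classes of test functions on $Z$ into those on $Y$. The paper's proof is terser and leaves the metric bookkeeping implicit, whereas you spell out the triangle inequality for $\delta_1$, the subadditivity of $t\mapsto t/(1+t)$ needed to pass to $d_N$, and the holomorphy and positivity of the zeta functions on $H_N$; these are exactly the details the paper's ``it suffices'' is hiding.
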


\begin{proof}
We observe that we have injections of algebras of functions $$\psi^* \, : \, C^{\infty}(Z,\R_{\geq 0}) \hookrightarrow C^{\infty}(Y,\R_{\geq 0}) \mbox{ and }\psi^* \, : \, C^{\infty}(Z) \hookrightarrow C^{\infty}(Y).$$
It then suffices to use the identity $$ \frac{\zeta_{Z,a_0}}{\zeta_{X,\varphi^*\psi^*(a_0)}}  =  \frac{\zeta_{Z,a_0}}{\zeta_{Y,\psi^*(a_0)}}  \cdot  \frac{\zeta_{Y,\psi^*a_0}}{\zeta_{X,\varphi^*\psi^*(a_0)}}, $$ and similarly for the two-variable version. 
\end{proof} 

We cannot directly apply Lemma \ref{ddd} to conclude that $\ell$ induces a distance, but see Section \ref{fff}.

\section{Example: length of rescaling a circle} \label{circular}

\begin{se} Let $S_r$ denote the circle of radius $r$, which we parameterize by an angle $\theta \in [0,2\pi[$. The metric is $ds^2=r^2d\theta$, $g_{11}=r^2, g^{11}=r^{-2}$, the Laplacian is  $-r^{-2} \partial^2_\theta$, with spectrum $\{n^2r^{-2}\}_{n \in \Z_{>0}}$ with multiplicity two, and eigenspace for $n$ spanned by $\{\sin(n\theta),\cos(n\theta)\}$. Let $\zeta(s)$ denote the Riemann zeta function. One sees directly that $$\zeta_{S_r,a_0}= 2r^{2s+1} \left( \int_{0}^{2 \pi} a_0(\theta) d\theta \right) \zeta(2s)$$ and $$\zeta_{S_r,a_1,a_2}= 2 r^{2s-1} \left( \int_{0}^{2\pi}  a_1(\theta)\partial_\theta^2(a_2)(\theta) d\theta \right) \zeta(2s) .$$ 
Hence \bea \label{zetacirc} \zeta_{S_r,a_1,a_2}= r^{2}  \zeta_{S_r,a_1\partial_\theta^2(a_2)}.\eea 
\end{se}

\begin{se} Let us compute the length of the natural rescaling homeomorphism $$\varphi_{r_1,r_2} \, : \, S_{r_1} \rightarrow S_{r_2} \, \, : \, \, \theta \mapsto \theta \ \ \ (\theta \in [0,2\pi[).$$  We find 
$$ \left| \frac{\zeta_{S_{r_1},a^*_0}}{\zeta_{S_{r_2},a_0}}\right| =  (r_1/r_2)^{2s+1}\mbox{ and } \left| \frac{\zeta_{S_{r_1},a^*_1,a^*_2}}{\zeta_{S_{r_2},a_1,a_2}}\right|= (r_1/r_2)^{2s-1},$$
so we find for the length of $\varphi_{r_1,r_2}$: 
$$ \ell(\varphi_{r_1,r_2}) = \frac{1}{1+\frac{1}{5|\log(r_1/r_2)|}}.$$
Figure \ref{fig1} depicts the $\ell(\varphi_{r,1})$ for $0\leqslant  r \leqslant  2$. Observe the nice ``conformal'' symmetry $\ell(\varphi_{r_1,r_2})=\ell(\varphi_{r_2,r_1})$.  Also, the two-variable zeta function does not affect this computation (as is to be expected from the fact that the spectrum characterizes a circle); in the next section, we will consider isospectral tori, for which exactly the one-variable zeta function plays no role.

\begin{center}
\begin{figure}[h]
\includegraphics[width=10cm]{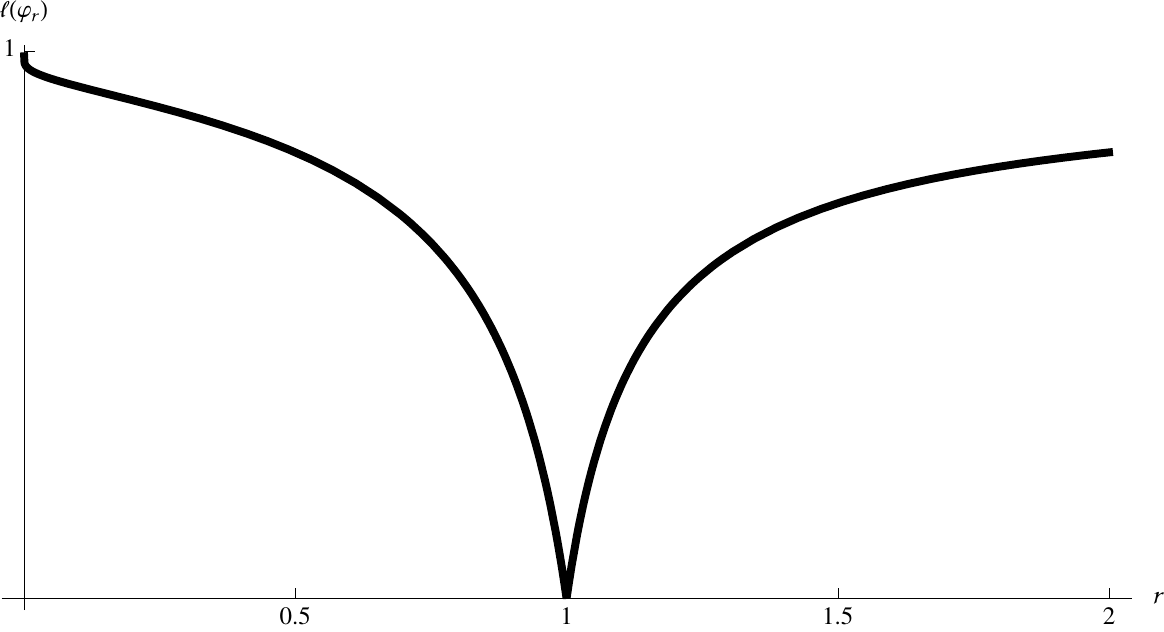}
\caption{Length of the rescaling homeomorphism $\varphi_r$ between a circle of radius $r$ and a circle of unit radius}
\label{fig1}
\end{figure}
\end{center}

\end{se}

\section{Example: length of a linear map between isospectral tori} \label{lengthtori}
\begin{se}Let $\T_1$ and $\T_2$ denote two \emph{isospectral} tori. Let $\varphi \, : \, \T_1 \rightarrow \T_2$ denote a smooth bijection, and assume that $\varphi$ arises from a linear map $A$ in the universal cover (any map of tori is homotopic to such a linear map with the same action on the homology of the torus, cf.\ \cite{Halpern}, Lemma 1): 
$$ \xymatrix{  \R^n \ar@{->}[r]^{A} \ar@{->}[d]_{\pi_1} &  \R^n   \ar@{->}[d]^{\pi_2} \\ \T_1=\R^n/\Lambda_1 \ar@{->}[r]^{\varphi} & \T_2=\R^n/\Lambda_2 } $$
This makes sense if $A\Lambda_1 \subseteq \Lambda_2$. If we denote by $G_1$ and $G_2$ the generator matrices of the two tori (matrices whose columns are basis vectors of the lattice), the condition is that \begin{equation} \label{intmat} G_2^{-1}AG_1 \in \mathrm{GL}(n,\Z). \end{equation} Taking determinants, we find $$w_\varphi = |\det(A)| = |\det(G_1^{-1} G_2)| = \vol(\T_2)/\vol(\T_1).$$
An example of such a map is the ``change of basis'' $A=G_2 G_1^{-1}$. Write $A^\top$ for the transpose of the matrix $A$. 
\end{se}

\begin{se} Since we assume $\T_1$ and $\T_2$ isospectral tori, they have the same (common) spectral zeta function. Hence from formula (\ref{torus1}) we find that 
$$ \left| \frac{\zeta_{\T_1,a^*_0}}{\zeta_{\T_2,a_0}}\right| =   \left| \frac{\int_{\T_2} a_0 w_{\varphi^{-1}}\, d\mu_{\R^n} }{\int_{\T_2} a_0 \, d\mu_{\R^n} } \right| = | \det(A^{-1}) | = \frac{\vol(\T_1)}{\vol(\T_2)} = 1.$$
Via formula (\ref{torus2}), the two variable zeta functions satisfy
$$ \sup_{\nabla a_1 \neq 0} \left| \frac{\tzeta_{\T_1,a^*_1}}{\tzeta_{\T_2,a_1}}\right| =  \sup_{\nabla a_1 \neq 0} \frac{\int_{\T_1} |\nabla(a^*_1)|^2 \, d\mu_{\R^n} }{\int_{\T_2} |\nabla(a_1)|^2\, d\mu_{\R^n} } =  \sup_{\nabla a_1 \neq 0} \frac{\int_{\T_1} |A\nabla(a_1)|^2\, d\mu_{\R^n} }{\int_{\T_2} |\nabla(a_1)|^2\, d\mu_{\R^n} }. $$
For every $v \in T_x \T_2$, we have $ |Av|^2 \leqslant ||A||_2 |v|^2 $, where $||A||_2$ is the spectral norm of the matrix $A$ ($=$ the square root of the largest eigenvalue of $A A^\top$).  Hence 
$$ d(\T_1,\T_2) \leqslant \ell(A) \leqslant \frac{\log ||A||_2}{1+\log ||A||_2}. $$
One may wonder whether this bound is attained. 
 \end{se}
\begin{example}
The smallest dimension in which there exist non-isometric isospectral tori is four, as was shown by Schiemann (\cite{Schiemann}), and an example is given by the two tori in the proof of \ref{exist}. For the specific map $A=G_- G_+^{-1}$  between these tori, we have $||A||_2 \approx 3.21537$, and $$d(\T_+, \T_-) \leqslant \ell(A) \leqslant 0.538733.$$
\end{example}

\section{Convergence in the spectral metric} \label{fff}

\begin{theorem} \label{convmet}
Suppose we are given two Riemannian manifolds $(X,g_X)$ and $(Y,g_Y)$ and a collection of smooth diffeomorphisms $\varphi_i \, : \, X \rightarrow Y$ whose length converges to zero. Then $X$ and $Y$ are isometric. 
\end{theorem}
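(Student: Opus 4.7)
The plan is to distill from $\{\varphi_i\}$ a single limiting smooth diffeomorphism $\psi \, : \, X \to Y$ and then invoke Theorem \ref{Z0thm} to conclude that $\psi$ is an isometry.

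Taking $a_0 \equiv 1$ in the definition of $\ell(\varphi_i)$: since $\varphi_i^*(1)=1$, the ratio $\zeta_X/\zeta_Y$ does not depend on $i$, so $|\log|\zeta_X(s)/\zeta_Y(s)|| \leqslant \ell(\varphi_i) \to 0$ uniformly on $[N,N+1]$ forces $\zeta_X\equiv\zeta_Y$. Hence $X$ and $Y$ are isospectral with the same dimension $N$ and volume. For any $a_0\in C^\infty(Y,\R_{\geqslant 0})\setminus\{0\}$ the two Dirichlet series are positive reals on $[N,N+1]$, so $d_N\to 0$ gives uniform convergence $\zeta_{X,\varphi_i^*a_0}(s)\to\zeta_{Y,a_0}(s)$ on $[N,N+1]$. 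The $\lambda$-th coefficient $\int_X \sigma_{X,\lambda}\varphi_i^*a_0\,d\mu_X$ is bounded uniformly in $i$ by $(\dim E_\lambda)\,\|a_0\|_\infty$, so a diagonal extraction combined with the identity principle for Dirichlet series yields
$$ \int_X \sigma_{X,\lambda}\,\varphi_i^* a_0 \, d\mu_X \;\longrightarrow\; \int_Y \sigma_{Y,\lambda}\,a_0 \, d\mu_Y $$
for every eigenvalue $\lambda$. Linearity extends this to signed $a_0 \in C^\infty(Y)$, and the two-variable hypothesis gives the analogous convergence with $g_X(d\varphi_i^*a_1,d\varphi_i^*a_1)$ in place of $\varphi_i^*a_0$.

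Apply these convergences with $\lambda = 0$, $a_0 = \Psi_{Y,\mu}^2$ and $a_1 = \Psi_{Y,\mu}$ (non-constant for $\mu\neq 0$) to obtain $\|\varphi_i^*\Psi_{Y,\mu}\|_{H^1(X)}^2 \to 1+\mu$. By Rellich--Kondrachov compactness and a diagonal extraction over the countable spectrum, pass to a subsequence such that $\varphi_i^*\Psi_{Y,\mu}\to\tilde\Psi_\mu$ in $L^2(X)$ for every $\mu$. Taking $a_0 = \Psi_{Y,\mu}\Psi_{Y,\mu'}$ (expressed as a difference of squares) yields orthonormality $\langle \tilde\Psi_\mu,\tilde\Psi_{\mu'}\rangle_{L^2(X)} = \delta_{\mu\mu'}$. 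Combining weak lower semicontinuity of the Dirichlet energy (giving $\|\nabla\tilde\Psi_{\mu_k}\|^2\leqslant\mu_k$) with the Ky Fan trace characterization $\sum_{k=1}^n \lambda_k(\Delta_X) \leqslant \sum_{k=1}^n \|\nabla\tilde\Psi_{\mu_k}\|^2$ and isospectrality $\lambda_k(\Delta_X)=\mu_k$, equality is forced throughout, and $\{\tilde\Psi_\mu\}$ is a complete orthonormal family of $\Delta_X$-eigenfunctions with the matching eigenvalues. Define $U \, : \, L^2(Y) \to L^2(X)$ by $U\Psi_{Y,\mu}:=\tilde\Psi_\mu$; then $U$ is unitary and intertwines the Laplacians.

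The remark after Lemma \ref{Watson} shows that unitarity plus Laplacian-intertwining alone only yield isospectrality, so the multiplicative structure of $U$ must be recovered. Each $\varphi_i^*$ is a $*$-algebra map with $\|\varphi_i^*f\|_\infty=\|f\|_\infty$, so weak-$*$ lower semicontinuity gives $\|Uf\|_\infty\leqslant\|f\|_\infty$. Passing to the $L^2$-limit in the identity $\varphi_i^*(fg) = \varphi_i^*f\cdot\varphi_i^*g$ via
$$ \|\varphi_i^*f\cdot\varphi_i^*g - Uf\cdot Ug\|_{L^2} \;\leqslant\; \|\varphi_i^*f\|_\infty\|\varphi_i^*g-Ug\|_{L^2} + \|\varphi_i^*f-Uf\|_{L^2}\|Ug\|_\infty $$
yields $U(fg) = Uf\cdot Ug$. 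Hence $U$ restricts to a unital $*$-homomorphism $C(Y)\to C(X)$, which by Gelfand duality is of the form $\psi^*$ for a continuous $\psi \, : \, X \to Y$; smoothness of $\psi$ follows from $U$ mapping $C^\infty(Y)$ into $C^\infty(X)$ (every smooth function expands in a rapidly-decaying eigenseries, and the $\tilde\Psi_\mu$ are smooth by elliptic regularity), and Lemma \ref{Watson} then gives that $\psi$ is an isometry.

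The most delicate step is the last one: recovering the algebraic structure of $U$, since a mere unitary intertwiner is far too weak to yield an isometric diffeomorphism. The combination of the uniform $L^\infty$-bound on pullbacks with strong $L^2$-convergence on eigenfunctions is precisely what allows the product identity to survive the limit. If this passage proves too fragile, a backup is to upgrade the convergence $\varphi_i^*\Psi_\mu\to\tilde\Psi_\mu$ from $L^2$ to $C^k$ via Sobolev bootstrapping using the higher-moment identities from all two-variable zeta coincidences, and then extract $\varphi_i\to\psi$ directly in the $C^0$-topology through a countable separating family of eigenfunctions.
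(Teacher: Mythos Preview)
Your approach is quite different from the paper's, and it has a genuine gap in the step you yourself flag as most delicate.

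The paper never tries to build a unitary intertwiner. It argues at the level of residues: since $a_0>0$ makes the ratio $\zeta_{X,\varphi_i^*a_0}/\zeta_{Y,a_0}$ positive, $\ell(\varphi_i)\to 0$ forces this ratio to $1$, hence the residues at $s=d/2$ converge. Lemma~\ref{residue} then gives $w_{\varphi_i}\to 1$, and Lemma~\ref{res2} applied to the two-variable zeta gives $\varphi_i^*g_Y\to g_X$ uniformly on $X$. Uniform convergence of metric tensors forces the distortion $\mathrm{dis}(\varphi_i)\to 0$; one extracts a pointwise limit of the $\varphi_i$ on a countable dense set, extends to a distance-preserving bijection $X\to Y$, and invokes Myers--Steenrod. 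No Rellich, no Ky~Fan, no Gelfand duality.

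The gap in your argument is the identification $U(fg)=Uf\cdot Ug$. Your displayed estimate does show that $\varphi_i^*(fg)=\varphi_i^*f\cdot\varphi_i^*g\to Uf\cdot Ug$ in $L^2$ whenever $\varphi_i^*f\to Uf$ and $\varphi_i^*g\to Ug$ in $L^2$; but you have only established these convergences for \emph{eigenfunctions} $f,g$. Since a product of eigenfunctions is not an eigenfunction, you know the $L^2$-limit of $\varphi_i^*(fg)$ is $Uf\cdot Ug$, but you have no independent reason that this limit equals $U(fg)$ as defined spectrally. Iterating your estimate builds a multiplicative limit map $\tilde U$ on the polynomial algebra in eigenfunctions, but to show $\tilde U=U$ you would need $\varphi_i^*\to U$ strongly on $L^2$, which requires a uniform $L^2\!\to\! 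L^2$ bound on $\varphi_i^*$, i.e.\ a uniform $L^\infty$ bound on $w_{\varphi_i^{-1}}$. Your $\lambda=0$ coefficient convergence only gives $w_{\varphi_i^{-1}}\to 1$ weakly, not in $L^\infty$. A secondary issue: Lemma~\ref{Watson} as stated requires $\psi$ to be a smooth \emph{diffeomorphism}, and Gelfand duality only hands you a continuous map; you have not argued bijectivity of $\psi$.

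Your backup plan---upgrading to $C^0$-convergence of the $\varphi_i$ themselves and extracting a limiting map directly---is much closer in spirit to what the paper does, and is the cleaner route.
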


\begin{proof} The proof is basically the ``convergent'' version of the first proof of Theorem \ref{Z0thm}. 

In the definition of $\ell(\varphi)$, we observe that both zeta functions $\zeta_{Y,a_0}(s)$ and $\zeta_{X,\varphi_i^*(a_0)}(s)$ have their right most pole at $s=d/2$. Both poles are simple, hence they cancel in the quotient. Therefore, the quotient function $\zeta_{X,\varphi_i^*(a_0)}(s)/\zeta_{Y,a_0}(s)$ is holomorphic in $s \geqslant d/2-1/2$. Also, since $a_0$ is positive, the quotient is a positive real valued function. We conclude from $\ell(\varphi) \rightarrow 0$ that 
$$\zeta_{X,\varphi_i^*(a_0)}(s)/\zeta_{Y,a_0}(s) \rightarrow 1 \mbox{ for } s \in \R.$$
In particular, we have convergence at $s=d/2$, and hence a convergence of residues (uniformly in $a_0$)
$$ \Res_{s=\frac d2} \zeta_{X,\varphi_i^*(a_0)}(s) = \lim_{s \rightarrow \frac d2+} \zeta_{X,\varphi_i^*(a_0)}(s) (s-\frac d2)  \rightarrow 
\lim_{s \rightarrow \frac d2+} \zeta_{Y,a_0}(s) (s-\frac d2) =  \Res_{s=\frac d2} \zeta_{Y,a_0}(s), $$
(not just in absolute value), where the limits are taken along the positive real axis.  

By the computation of these residues in Lemma \ref{residue}, we conclude that the jacobians converge to $1$: 
$$ w_{\varphi_i} \rightarrow 1. $$
For the two-variable zeta functions, one may reason in a similar way, using that $g(da,da)$ is a totally positive function. From Lemma \ref{res2}, we get in a similar way a convergence of metrics $$ \varphi_i^*(g_Y) \rightarrow g_X, $$ 
uniformly on $X$. 

Recall that the \emph{distortion} of a map $\varphi \, : \, X \rightarrow Y$ is defined to be
$$ \mathrm{dis}(\varphi) := \sup_{x_1,x_2 \in X} \left| d_Y(\varphi(x_1),\varphi(x_2))-d_X(x_1,x_2)\right|. $$
The distance in terms of the metric tensor is 
$$  d(x_1,x_2) := \mathop{\inf_{\gamma \in C^1([0,1],X)}}_{\gamma(0)=x_1, \gamma(1)=x_2} \int_0^1 \sqrt{ \sum g^{ij}(\gamma(t)) \gamma(t)'_i \gamma(t)'_j} \, dt   .$$ By uniform convergence of metric tensors on the manifold $X$, we can interchange the infimum in the definition of the distance with the limit in metrics to conclude that \begin{equation} \label{dis} \mathrm{dis}(\varphi_i) \rightarrow 0. \end{equation}
We can now finish the proof as in \cite{Burago} (proof of Thm.\ 7.3.30). Since $X$ is compact, we can find a dense countable set $S \subset X$, and we can find a subsequence $\{ \varphi'_i\}$ of $\{ \varphi_i \}$ that converges pointwise in $Y$ at every $x \in S$. This allows us to define a limit map $$\varphi \, : \, S \rightarrow Y \mbox{\ \ by \ \ } \varphi(x):= \lim \varphi'_i(x)$$  for $x \in S$. This limit map is distance-preserving by (\ref{dis}), and so can be extended to a distance-preserving bijection from $X \rightarrow Y$. Now the Myers-Steenrod theorem (\cite{KN}, 3.10) implies that $\varphi$ is a smooth isometry between $X$ and $Y$. 
\end{proof}

\begin{corollary}
The function ``zeta-distance'' $$d_\zeta(X,Y):=\min \{ \inf_{C^{\infty}(X \stackrel{\varphi}{\rightarrow} Y)} \ell(\varphi),+\infty\}$$
defines an extended metric between isometry classes of Riemannian manifolds. 
\end{corollary}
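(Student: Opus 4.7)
The plan is to check the four axioms of an extended metric directly. Well-definedness on isometry classes is forced by the composition law \textbf{(L2)} together with the fact that isometries have length zero: pre- or post-composing an arbitrary diffeomorphism with an isometry does not change its length, so the infimum defining $d_\zeta$ is invariant under replacing $X$ or $Y$ by isometric copies. Non-negativity is built into the definition.

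Symmetry of $d_\zeta$ does not follow formally, since the definition is not manifestly symmetric (unlike Lemma~\ref{ddd}, which symmetrises explicitly). The key step I would establish is the pointwise identity $\ell(\varphi) = \ell(\varphi^{-1})$. Pullback along a smooth diffeomorphism is a bijection $\varphi^*\colon C^{\infty}(Y) \to C^{\infty}(X)$ that preserves positivity and non-constantness, so the substitution $b_0 = \varphi^*(a_0)$, $b_1 = \varphi^*(a_1)$ puts the suprema in $\ell(\varphi^{-1})$ in bijection with those in $\ell(\varphi)$. Under this substitution, $(\varphi^{-1})^*(b_0) = a_0$, so the quotient $\zeta_{Y,(\varphi^{-1})^*b_0}/\zeta_{X,b_0}$ is simply the reciprocal of $\zeta_{X,\varphi^*a_0}/\zeta_{Y,a_0}$, and similarly for the two-variable version; since $d_N$ depends only on $|\log|f/g||$, which is invariant under swapping $f$ and $g$, the two lengths coincide. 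Taking infima gives $d_\zeta(X,Y) = d_\zeta(Y,X)$.

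The triangle inequality is immediate from \textbf{(L2)}: for smooth diffeomorphisms $\varphi\colon X \to Y$ and $\psi\colon Y \to Z$, the composite $\psi\circ\varphi$ is a smooth diffeomorphism $X\to Z$ of length at most $\ell(\varphi)+\ell(\psi)$, so
\[
d_\zeta(X,Z) \leqslant \inf_{\varphi,\psi} \ell(\psi\circ\varphi) \leqslant \inf_\varphi \ell(\varphi) + \inf_\psi \ell(\psi) = d_\zeta(X,Y) + d_\zeta(Y,Z),
\]
where the case in which either term is $+\infty$ (i.e.\ no diffeomorphism exists) is trivial.

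The only substantive step is the non-degeneracy direction ``$d_\zeta(X,Y) = 0 \Rightarrow X$ and $Y$ are isometric''. Here one cannot appeal directly to Lemma~\ref{ddd}, since there is no a priori reason the infimum is attained; this is precisely the point of Theorem~\ref{convmet}. Extract a minimising sequence $\varphi_i\colon X\to Y$ of smooth diffeomorphisms with $\ell(\varphi_i)\to 0$, and apply that theorem to produce an isometry $X \cong Y$. The converse, that $X$ isometric to $Y$ implies $d_\zeta(X,Y)=0$, is trivial since any isometry realises length zero. The hard part has therefore already been absorbed into Theorem~\ref{convmet}, and what remains is only the bookkeeping outlined above.
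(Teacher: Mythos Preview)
Your proof is correct and follows the paper's approach: the substantive step is non-degeneracy, which is exactly Theorem~\ref{convmet}, and the paper's own proof says nothing more than this. You are in fact more careful than the paper on one point: the paper's framework in Lemma~\ref{ddd} symmetrises explicitly, whereas $d_\zeta$ as defined does not, so symmetry genuinely requires the observation $\ell(\varphi)=\ell(\varphi^{-1})$ that you supply (which works because pullback by a diffeomorphism bijects the test-function classes and $d_N$ is symmetric in its arguments). The paper glosses over this; your version is the more complete argument.
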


\begin{proof}
It suffices to prove that if $d_\zeta(X,Y)=0$, then $X$ and $Y$ are isometric, and this follows from the previous theorem. 
\end{proof}

\begin{remark} There are other distance functions between Riemannian manifolds up to isometry, such as Lipschitz, uniform or Gromov-Hausdorff distance (e.g., \cite{Gromov}, \cite{Burago}), the distances $d_t$ and $\delta_t$ of B\'erard-Besson-Gallot (\cite{BBG}) and the  spectral distances of Fukaya (\cite{Fukaya}) and Kasue-Kumura (\cite{KK1}).  These distances pose various computational challenges --- in the previous sections, we hope to at least have hinted at the computational aspects of the ``zeta-distance'' we propose. We finally observe that such distances play an increasingly important role in physics and cosmology (compare, e.g., \cite{MS0}, \cite{Douglas}).  
\end{remark} 

\begin{se} We conclude by comparing our ``zeta-distance'' $d_\zeta$ to the other distances. For this, we recall in the following diagram the relation between various forms of convergence:

\begin{center}
\begin{figure}[h]
$ \xymatrix{  & \mbox{Lipschitz-conv.} \ar@{=>}[dl]_{\mbox{\hspace*{-2cm} {\footnotesize simple spectrum}}}^{\mbox{\cite{BBG}}} \ar@{=>}[d]^{\mbox{\cite{BBG}}} \ar@{=>}[dr]^{\mbox{\cite{Burago}}}\\   d_t\mbox{-conv.} & \delta_t\mbox{-conv.} & \mbox{unif.\ conv.} \ar@{=>}[d]^{\mbox{\cite{Burago}}} \ar@{<=>}[r] & \mathbf{d_\zeta}\mbox{\textbf{-conv.}} \\  \mbox{Kasue-Kumura-conv.} \ar@{=>}[rr]_{\mbox{\cite{KK1}}} & &  \mbox{GH-conv.} &  } $
\caption{Some relations between convergence in various distances (in a fixed $C^{\infty}$-type)}
\label{figgraph}
\end{figure}
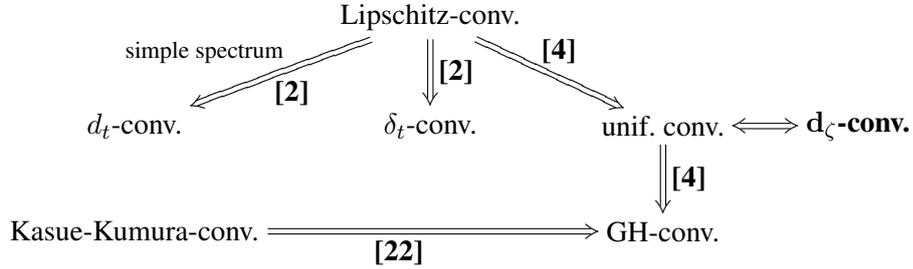
\end{center}

\end{se}

\begin{proposition}
Let $\mathcal{M}$ denote the set of closed Riemannian manifolds up to isometry. Then $d_\zeta$ induces the topology of uniform convergence in $C^{\infty}$-diffeomorphic types on $\mathcal{M}$, i.e., if two such manifolds are not $C^{\infty}$-diffeomorphic, then the manifolds are at infinite distance, and otherwise, a sequence of manifolds converge if and only if there is a sequence of $C^{\infty}$-diffeomorphisms between them whose distortion tends to zero. 
\end{proposition}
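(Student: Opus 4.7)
The proposition splits into two assertions: first, that $d_\zeta(X, Y) = +\infty$ whenever $X$ and $Y$ are not $C^\infty$-diffeomorphic; and second, within a fixed $C^\infty$-diffeomorphism class, that $d_\zeta(X_i, Y) \to 0$ if and only if there exist smooth diffeomorphisms $\varphi_i \colon X_i \to Y$ with $\mathrm{dis}(\varphi_i) \to 0$. The first claim is immediate from the definition: the set of smooth diffeomorphisms $X \to Y$ is then empty and the infimum defining $d_\zeta$ is $+\infty$ by convention. For the forward implication of the iff, choose $\varphi_i$ realising $\ell(\varphi_i) \to 0$; the argument in the proof of Theorem~\ref{convmet} applies with only notational changes when the source varies with $i$---the residue computations of Lemmas~\ref{residue} and~\ref{res2}, and the holomorphy of $\zeta_{X_i, \varphi_i^*(a_0)}/\zeta_{Y, a_0}$ on $\Re(s) \geqslant d/2 - 1/2$, are uniform in $i$---so one gets $w_{\varphi_i} \to 1$ and $\varphi_i^*(g_Y) \to g_{X_i}$ uniformly on $X_i$, whence $\mathrm{dis}(\varphi_i) \to 0$ as in equation~(\ref{dis}).

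The substantive content is the converse direction. Given smooth diffeomorphisms $\varphi_i \colon X_i \to Y$ with $\mathrm{dis}(\varphi_i) \to 0$, we must show $\ell(\varphi_i) \to 0$. The plan has three steps. Step 1: upgrade distortion convergence to $C^0$-convergence of the pulled-back metric tensors $\varphi_i^*(g_Y) \to g_{X_i}$. Since all manifolds lie in a fixed $C^\infty$-diffeomorphism class, an Arzel\`a--Ascoli argument applied to the family $\{\varphi_i\}$ combined with a Myers--Steenrod style rigidity promotes uniform distance-convergence to convergence of the metric tensors themselves. Step 2: apply standard perturbation theory (Kato--Rellich) for the Laplace--Beltrami operator under $C^0$-small metric perturbations to deduce that the spectra, volumes, and the squared-eigenfunction sums $\sigma_{X_i, \lambda}$ from Proposition~\ref{squares0} converge to those of $Y$. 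Step 3: plug this into the expansion~(\ref{zetaexpansion}) and its two-variable analogue from Section~\ref{arbspec}. Coefficient-wise convergence of the Dirichlet series is upgraded to uniform convergence on the compact strip $d \leqslant s \leqslant d+1$ by the absolute-convergence dominations available for $\Re(s) > d/2$; uniformity in the test functions $a_0, a_1$ is handled by restricting to a dense countable subset of the unit ball of $C^\infty(Y)$, as in the remarks following Theorem~\ref{Z0thm}. This yields $\ell(\varphi_i) \to 0$.

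The main obstacle is Step~1 of the converse. Distortion is an \emph{integrated} quantity controlling geodesic distances, whereas the metric tensor is a pointwise infinitesimal object; the transition requires exploiting the fixed smooth structure and the smoothness of all metrics. Morally, short-distance ratios recover the metric tensor, so uniform distance-convergence should force metric-tensor convergence, but making this precise \emph{uniformly along the sequence} and without extra curvature hypotheses is the technical crux. Once this is in place, Steps~2 and~3 are essentially standard, though the uniform control in the function variables $a_0,a_1$ requires the separable-approximation trick mentioned above to pass from pointwise convergence of Dirichlet series to uniform smallness of their $d_N$-distance.
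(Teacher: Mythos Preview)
Your forward implication ($d_\zeta$-convergence $\Rightarrow$ distortion $\to 0$) agrees with the paper's argument. The paper streamlines it by one device you omit: precomposing with $\varphi_i^{-1}$ (an isometry $(X,h_i)\to(X_i,g_i)$ for $h_i:=(\varphi_i^{-1})^*g_i$) reduces everything to the identity map between varying metrics $h_i$ and the fixed $g$ on the \emph{same} underlying manifold, so the residue computation is applied once without any ``uniformity in $i$'' bookkeeping. Note also that the paper's written proof stops after this direction; it does not supply an argument for the converse.

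Your converse, however, has a genuine gap beyond the difficulty you acknowledge. You set out to prove $\ell(\varphi_i)\to 0$ for the \emph{given} maps with $\mathrm{dis}(\varphi_i)\to 0$, and this is simply false. Take $X_i=Y=(S^1,d\theta^2)$ and $\varphi_i(\theta)=\theta+\tfrac{1}{2i}\sin(i\theta)$. Then $\varphi_i'\in[\tfrac12,\tfrac32]$, so $\varphi_i$ is a diffeomorphism; one checks $\mathrm{dis}(\varphi_i)\leqslant 1/i\to 0$; yet $\varphi_i^*(g_Y)=(1+\tfrac12\cos(i\theta))^2\,d\theta^2$ oscillates between $\tfrac14$ and $\tfrac94$ and does not converge to $g_{X_i}$ in $C^0$. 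Step~1 therefore fails, and in fact $\ell(\varphi_i)\not\to 0$ (already the one-variable ratio $\int a_0^*/\int a_0$ stays bounded away from $1$ for $a_0$ concentrated where $\varphi_i'=3/2$). Of course $d_\zeta(X_i,Y)=0$ here, witnessed by $\psi_i=\mathrm{id}$, so the proposition survives---but your method does not.

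The structural error is that distortion is a $C^0$ datum on $\varphi_i$ and carries no control on first derivatives; no Arzel\`a--Ascoli or Myers--Steenrod argument can manufacture pointwise metric-tensor convergence from it. The correct target for the converse is $\inf_\psi \ell(\psi)\to 0$, not $\ell(\varphi_i)\to 0$: one must either \emph{replace} the given $\varphi_i$ by smoother witnesses, or impose a priori regularity on the family of metrics. Steps~2 and~3 of your plan rest on a Step~1 that is false as stated, so the converse needs a different architecture, not merely sharper estimates.
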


\begin{proof}
Suppose $(X_i,g_i) \rightarrow (X,g)$ converges in $d_\zeta$. This means that there is a sequence of $C^{\infty}$-diffeomorphisms $\varphi_i \, : \, (X_i,g_i) \rightarrow (X,g)$ whose length converges to zero. We precompose this with $\varphi_i^{-1}$: 
$$ \xymatrix{  (X_i,g_i) \ar@{->}[r]^{\varphi_i} \ar@{<-}[d]^{\varphi_i^{-1}} & (X,g) \\ (X,(\varphi_i^{-1})^*(g_i))  \ar@{->}[ru]_{\mbox{Id}} } $$
Hence we have a sequence of metrics $h_i:=(\varphi_i^{-1})^*(g_i)$ for which the length of the identity map converges to $0$. Taking residues in the two-variable zeta functions, we find that $h_i \rightarrow g$ uniformly. 

\end{proof}

\bibliographystyle{amsplain}

\end{document}